\let\pa\partial
\let\na\nabla
\let\eps\varepsilon
\newcommand{\N}{{\mathbb N}}
\newcommand{\R}{{\mathbb R}}
\newcommand{\diver}{\operatorname{div}}
\newtheorem{theorem}{Theorem}
\newtheorem{lemma}[theorem]{Lemma}
\newtheorem{corollary}[theorem]{Corollary}
\begin{document}

\title[An incompressible Navier-Stokes-Maxwell-Stefan system]{Analysis of an
incompressible Navier-Stokes-Maxwell-Stefan system}

\author[X. Chen]{Xiuqing Chen}
\address{School of Sciences, Beijing University of Posts and Telecommunications,
  Beijing 100876, China}
\email{buptxchen@yahoo.com}

\author[A. J\"ungel]{Ansgar J\"ungel}
\address{Institute for Analysis and Scientific Computing, Vienna University of
	Technology, Wiedner Hauptstra\ss e 8--10, 1040 Wien, Austria}
\email{juengel@tuwien.ac.at}

\date{\today}

\thanks{The first author acknowledges support from the National Science
Foundation of China, grant 11101049. The second author was partially supported
by the Austrian Science Fund (FWF), grants P22108, P24304, I395, and W1245, and
the Austrian-French Project of the Austrian Exchange Service (\"OAD)}

\begin{abstract}
The incompressible Navier-Stokes equations coupled to the Maxwell-Stefan relations
for the molar fluxes are analyzed in bounded domains with no-flux
boundary conditions. The system models the dynamics of a multicomponent
gaseous mixture under isothermal conditions.
The global-in-time existence of bounded weak
solutions to the strongly coupled model and their exponential decay to the homogeneous
steady state are proved. The mathematical difficulties are due to the singular
Maxwell-Stefan diffusion matrix, the cross-diffusion terms, and the
Navier-Stokes coupling. The key idea of the proof is the use of a new entropy
functional and entropy variables, which allows for a proof of positive lower and
upper bounds of the mass densities without the use of a maximum principle.
\end{abstract}

\keywords{Incompressible Navier-Stokes equations, Maxwell-Stefan equations,
cross-diffusion, entropy dissipation, entropy variables, global existence of solutions,
long-time behavior of solutions.}

\subjclass[2000]{35K55, 35Q30, 35A01, 35B40, 76D05.}

\maketitle


\section{Introduction}

The dynamics of a multicomponent gaseous mixture can be described by
the Navier-Stokes equations, which represent the balance of mass, momentum,
and energy, and the Maxwell-Stefan equations, which model the diffusive
transport of the components of the mixture. Applications arise, for instance,
from physics (sedimentation, astrophysics), medicine (dialysis, respiratory airways),
and chemistry (electrolysis, ion exchange, chemical reactors) \cite{WeKr00}.
The understanding of the analytical structure of coupled
Navier-Stokes-Maxwell-Stefan systems is of great importance for an accurate
modeling and efficient numerical simulation of these applications. In this paper,
we make a step forward to this understanding by proving
the global-in-time existence of weak solutions and their long-time behavior
for Navier-Stokes-Maxwell-Stefan systems for incompressible fluids
under natural assumptions. This is the first analytical result for the full coupled
incompressible model.

More precisely, we consider a multicomponent fluid consisting of $N+1$ components
with the mass densities $\rho_i$, molar masses $M_i$, and velocities $u_i$.
As in \cite{BFS12},
we prescribe a system of partial mass balances together with a common mixture
momentum balance, where the diffusive fluxes are given by the
Maxwell-Stefan relations.
The partial mass balances for the molar concentrations $c_i=\rho_i/M_i$ read as
$$
  \pa_t c_i + \diver(j_i+c_i u) = 0, \quad i=1,\ldots,N+1,
$$
where the barycentric velocity $u$ and the total mass density $\rho^*$ of the fluid
are defined by
$\rho^* u=\sum_{i=1}^{N+1}\rho_i u_i$ and $\rho^*=\sum_{i=1}^{N+1}\rho_i$, and the
molar mass fluxes $j_i$ are given by $j_i=c_i(u_i-u)$. By definition of $j_i$,
it holds that $\sum_{i=1}^{N+1}M_i j_i=0$, and therefore,
one of the partial mass balances
can be replaced by the continuity equation $\pa_t\rho^* + \diver(\rho^* u)=0$.
The mixture momentum balance equations are
$$
  \pa_t(\rho^* u) + \diver(\rho^* u\otimes u-S) + \na p = \rho^* f,
$$
where $p$ is the pressure, the force density equals
$\rho^* f=\sum_{i=1}^{N+1}\rho_i f_i$, and the viscous stress tensor is
$S=\nu^*(\na u+\na u^\top)$, where $\nu^*$ is the viscosity constant.
In this paper, we suppose that $f_i=f$ and we impose
the incompressibility conditions
$$
  \rho^* = \mbox{const.}, \quad \diver u = 0.
$$
For simplicity, we set $\rho^*=1$ and $\nu^*=1$.

The above equations are closed by relating the molar mass fluxes $j_i$
to the molar concentrations $c_i$ by the Maxwell-Stefan equations
$$
  -\sum_{k=1}^{N+1}\frac{x_k j_i-x_i j_k}{D_{ik}} = c_i\na\mu_i - y_i\na p
	- \rho_i(f_i-f), \quad i=1,\ldots,N+1,
$$
where $x_i=c_i/c$ with $c=\sum_{k=1}^{N+1}c_i$ are the molar fractions,
$y_i=\rho_i/\rho^*=\rho_i$ are the mass fractions,
$\mu_i$ are the molar-based chemical
potentials, and $D_{ik}=D_{ki}>0$ for $i\neq k$ are the diffusion coefficients.
Our second assumption is that the mixture of gases
is ideal such that the chemical potentials
can be written as $\mu_i=\ln x_i+\mu_{0i}(p)$ with
$d\mu_{0i}/dp=\phi_i/c_i$, where
$\phi_i$ is the volume fraction (see \cite[Section 1.1]{BFS12}). Since $f_i=f$,
this implies that
$$
  -\sum_{k=1}^{N+1}\frac{x_k j_i-x_i j_k}{D_{ik}} = \frac{c_i}{x_i}\na x_i
	+ (\phi_i-y_i)\na p = c\na x_i + (\phi_i-y_i)\na p.
$$
We assume further that the volume and mass fractions are comparable such that
the contribution $(\phi_i-y_i)\na p$ can be neglected. This gives the desired
closure relations
\begin{equation*}
  -\sum_{k=1}^{N+1}\frac{x_k j_i-x_i j_k}{D_{ik}} = c\na x_i, \quad i=1,\ldots,N+1.
\end{equation*}
These relations, together with the mass balance equations, can also be
derived from a system of kinetic equations with BGK-type collision operator
in the Chapman-Enskog expansion \cite{AAP02}.

Setting $J_i=M_ij_i$, the incompressible Navier-Stokes-Maxwell-Stefan system
analyzed in this paper reads as
\begin{align}
  & \pa_t\rho_i + \diver(J_i + \rho_i u) = 0, \quad \mbox{in }\Omega,\ t>0,
	\label{1.rho} \\
	& \pa_t u + (u\cdot\na)u - \Delta u + \na p = f, \quad \diver u=0, \label{1.u} \\
  & \na x_i = -\sum_{k=1}^{N+1}\frac{\rho_k J_i-\rho_i J_k}{c^2 M_iM_k D_{ik}},
	\quad i=1,\ldots,N+1, \label{1.nax}
\end{align}
where $x_i$ and $\rho_i$ are related by $x_i=\rho_i/(cM_i)$ with
$c=\sum_{i=1}^{N+1}\rho_i/M_i$ and $\Omega\subset\R^d$ ($d\le 3$) is a
bounded domain. Note that $\rho^*=1$ implies that
$\sum_{i=1}^{N+1}\rho_i=\sum_{i=1}^{N+1}x_i=1$.
The initial and boundary conditions are
\begin{equation}\label{1.bic}
  \rho_i(\cdot,0)=\rho_i^0, \ u(\cdot,0)=u^0\quad\mbox{in }\Omega, \quad
  \na\rho_i\cdot\nu=0, \ u=0\quad\mbox{on }\pa\Omega,
\end{equation}
where $i=1,\ldots,N+1$ and $\nu$ is the normal exterior unit vector on $\pa\Omega$.

There are several difficulties to overcome in the analysis of the above system.

First, the molar mass fluxes are not explicitly given as a linear combination of
the mass density gradients, which makes necessary to invert the flux-gradient
relations \eqref{1.nax}. However, as the Maxwell-Stefan equations are
linearly dependent, we need to invert on a subspace. In the engineering
literature, this inversion is usually done in an approximate way \cite{APP03}.
Giovangigli \cite{Gio91} suggested an iterative procedure using the Perron-Frobenius
theory. A general inversion result was proved by Bothe \cite{Bot11},
again based on the Perron-Frobenius theory.

Second, equations \eqref{1.rho}-\eqref{1.nax} are strongly coupled through
the Maxwell-Stefan relations \eqref{1.nax} and the velocity $u$,
computed from the Navier-Stokes equations. Because of the cross-diffusion coupling
in \eqref{1.rho} and \eqref{1.nax}, standard tools like maximum principles and
regularity theory are not available. In particular, it is not clear how to prove
positive lower and upper bounds for the mass densities $\rho_i$ and even the
local existence of solutions is not trivial.

Third, we need to find suitable a priori estimates for the coupled system.
The energy method provides gradient estimates for the velocity,
but it is less clear how to derive estimates for the mass densities.
Moreover, the velocity does not need to be bounded such that the
term $\diver(\rho_i u)$ in \eqref{1.rho} needs to be treated carefully.

In view of these difficulties, it is not surprising that there exist
only partial results on such systems in the literature.
First results were concerned
with the Maxwell-Stefan equations \eqref{1.rho} and \eqref{1.nax}
with vanishing velocity $u=0$ and equal molar masses $M=M_i$.
Griepentrog \cite{Gri04} and later Bothe \cite{Bot11} derived a local existence
theory; Giovangigli \cite[Theorem 9.4.1]{Gio99} proved the global existence
of solutions with initial data sufficiently close to the equilibrium state;
Boudin, Grec, and Salvarani \cite{BGS12} investigated a particular two-component
model; and J\"ungel and Stelzer \cite{JuSt13} presented general global existence
results. The Maxwell-Stefan system with given bounded velovity $u\neq 0$ was
analyzed by Mucha, Pokorn\'y, and Zatorska \cite{MPZ13}. They imposed a special
diffusion matrix which avoids the inversion problem.

Other papers were concerned with the full coupled system but
in particular situations.
For instance, Zatorska \cite{Zat11} proved the existence
of weak solutions to the stationary compressible model with three fluid components
and special isobaric pressures. She also proved the sequential stability
of weak solutions to the two-component system on the three-dimensional torus
\cite{Zat12}. Mucha, Pokorn\'y, and Zatorska \cite{MPZ12}
showed a global existence result for a regularized compressible system for two
components. The Navier-Stokes equations contain artificial higher-order
differential operators which regularize the problem.
In \cite{MMT95}, the global existence for the incompressible
Navier-Stokes-Maxwell-Stefan system was announced but not proved.
For numerical approximations using a finite-volume method, we refer to
\cite{AMS98}.

In this paper, we prove a general global existence result for the
full coupled system
\eqref{1.rho}-\eqref{1.bic}, allowing for different molar masses $M_i$.
We overcome the above difficulties by combining
analytical tools for the incompressible Navier-Stokes equations due to
Temam \cite{Tem84}; the Perron-Frobenius theory for the matrix inversion problem
exploited by Bothe \cite{Bot11}; and the entropy-dissipation method developed
for cross-diffusion systems in \cite{ChJu04,JuSt13}. We detail our key ideas below.

In order to state our first main result, we introduce the following spaces
(see \cite[Chapter I]{Tem84}).
Let $\Omega\subset\R^d$ be a bounded domain with $\pa\Omega\in C^{1,1}$ and let
\begin{equation}\label{1.space}
\begin{aligned}
  \mathcal{H} &= \{u\in L^2(\Omega;\R^d):\diver u = 0,\
	u\cdot\nu|_{\pa\Omega}=0\}, \\
	\mathcal{V} &= \{u\in H_0^1(\Omega;\R^d):\diver u=0\},
	\quad \mathcal{V}_2 = \mathcal{V}\cap H^2(\Omega;\R^d), \\
	\widetilde H^2(\Omega;\R^N) &= \{q\in H^2(\Omega;\R^N):
	\na q\cdot\nu|_{\pa\Omega}=0\}.
\end{aligned}
\end{equation}
We define similarly the space $\widetilde H^2(\Omega)$.
We recall that functions $u\in L^2(\Omega;\R^d)$ with $\diver u\in L^2(\Omega)$
satisfy $u\cdot\nu|_{\pa\Omega}\in H^{-1/2}(\pa\Omega)$ such that the
space $\mathcal{H}$ is well defined \cite[Theorem I.1.2]{Tem84}.

\begin{theorem}[Global existence]\label{thm.ex}
Let $d=1,2,3$, $T>0$, and $D_{ij}=D_{ji}>0$ for $i,j=1,\ldots,N+1$, $i\neq j$.
Suppose that $f\in L^2(0,T;\mathcal{V}')$, $u^0\in \mathcal{H}$,
and let $\rho_1^0,\ldots,\rho_{N+1}^0\in L^1(\Omega)$ be nonnegative functions
which satisfy $\sum_{i=1}^{N+1}\rho_i^0=1$ and $h(\rho^0)<+\infty$, where
$\rho^0=(\rho_1^0,\ldots,\rho_N^0)$ and $h$ is defined in \eqref{1.h} below.
Then there exists a global weak
solution $(u,\rho_1,\ldots,\rho_{N+1})$ to \eqref{1.rho}-\eqref{1.bic}
(in the sense of \eqref{4.weak1}-\eqref{4.weak2} below)
such that $\rho_i\ge 0$, $\sum_{j=1}^{N+1}\rho_i=1$
in $\Omega\times(0,T)$, and
\begin{align*}
  & u\in L^\infty(0,T;\mathcal{H})\cap L^2(0,T;\mathcal{V}), \quad
	\pa_t u\in L^2(0,T;\mathcal{V}_2'), \\
	& \rho_i\in L^2(0,T;H^1(\Omega)), \quad
	\pa_t\rho_i\in L^2(0,T;\widetilde H^2(\Omega)'),
	\quad i=1,\ldots,N+1.
\end{align*}
\end{theorem}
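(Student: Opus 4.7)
The plan is to combine Temam's Faedo-Galerkin approach for the incompressible Navier-Stokes part with the entropy-variable/boundedness-by-entropy technique of J\"ungel-Stelzer for the cross-diffusion part, coupling the two through a regularized time-discretized scheme whose passage to the limit is controlled by a single entropy-energy estimate.

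The algebraic preliminaries go as follows. Using $\sum_{i=1}^{N+1}\rho_i=1$ I eliminate $\rho_{N+1}$ and invert the Maxwell-Stefan flux-gradient relation \eqref{1.nax} on the subspace $\{J\in\R^{N+1}:\sum_i M_iJ_i=0\}$ by Bothe's Perron-Frobenius argument, producing $J=-A(\rho)\na\rho$ for a diffusion matrix $A(\rho)$ that is positive definite on the open simplex but may degenerate on its boundary. The natural variables are the entropy variables $w_i=\pa h/\pa\rho_i$ associated with the Boltzmann-type entropy $h$ defined in \eqref{1.h}. Under the change of variables $w\mapsto\rho(w)$, the image automatically lies in the open simplex, so the pointwise bounds $0<\rho_i<1$ are built into the scheme and no maximum principle is needed.

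Next I would set up an approximation by discretizing in time with step $\tau$, adding a higher-order elliptic regularization $\varepsilon(\Delta^2 w+w)$ to the mass equations so that the approximate $w^\tau$ lies in $\widetilde H^2(\Omega;\R^N)$, and using for the velocity a Faedo-Galerkin space of divergence-free eigenfunctions of the Stokes operator. Existence at each time step follows from a Leray-Schauder (or Brouwer) fixed point applied to the resulting elliptic coupled system: the $w$-equation is coercive in $\widetilde H^2$ thanks to the $\varepsilon$-term, while the $u$-equation is handled in the standard Temam framework, with the coupling made compact by the regularization.

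The heart of the argument is the entropy-energy identity. Testing the mass equations by $w$ and the momentum equation by $u$ and adding, one exploits two algebraic cancellations: the convection contribution reduces by integration by parts to $-\int u\cdot\na h(\rho)\,dx=\int h(\rho)\diver u\,dx=0$ because $\diver u=0$ and $u\cdot\nu=0$, and the Maxwell-Stefan quadratic form in $w$ is nonnegative with kernel exactly the constraint $\sum_i M_i J_i=0$, which is where Bothe's inversion enters. This yields bounds, independent of $(\tau,\varepsilon)$, on $\|u\|_{L^\infty(0,T;\mathcal{H})}$, $\|u\|_{L^2(0,T;\mathcal{V})}$, $\sup_t h(\rho(t))$, $\|\na\rho\|_{L^2}$, $\|\pa_t u\|_{L^2(0,T;\mathcal{V}_2')}$ and $\|\pa_t\rho_i\|_{L^2(0,T;\widetilde H^2(\Omega)')}$. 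The Aubin-Lions lemma then delivers strong $L^2$-convergence of both $u$ and $\rho$, sufficient to pass to the limit $\varepsilon,\tau\to 0$ in the convective terms $(u\cdot\na)u$ and $\diver(\rho_i u)$ and in the $\rho$-dependent matrix $A(\rho)$. The main obstacle I anticipate is controlling the entropy dissipation on the degenerate set $\{\rho_i=0\}$ where $A(\rho)$ loses uniform ellipticity; the resolution is that the entropy production controls $\sum_i\|\na\sqrt{\rho_i/M_i}\|_{L^2}^2$, which together with $0\le\rho_i\le 1$ translates by Gagliardo-Nirenberg into a uniform $L^2(0,T;H^1(\Omega))$ bound on each $\rho_i$, closing the compactness loop.
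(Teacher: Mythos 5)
Your proposal follows essentially the same route as the paper: entropy variables $w_i=\pa h/\pa\rho_i$, elimination of $\rho_{N+1}$ and inversion of the Maxwell--Stefan relations \`a la Bothe, a time-discretized scheme regularized by $\eps(\Delta^2 w+w)$ solved via Leray--Schauder, the cancellation of the convective term against $w$ by $\diver u=0$, the entropy-dissipation bound, and Aubin--Lions compactness to pass to the limit $(\eps,\tau)\to 0$ --- the only cosmetic difference being a Faedo--Galerkin space for the velocity where the paper solves the time-discrete Navier--Stokes step directly by Lax--Milgram. Be aware, though, that what you state as automatic (invertibility of $w\mapsto\rho'$ with image in the open simplex, convexity of $h$, and the fact that the dissipation controls $\sum_i\|\na\sqrt{x_i}\|_{L^2}^2$ with $x_i=\rho_i/(cM_i)$ rather than $\na\sqrt{\rho_i/M_i}$) is precisely where the unequal molar masses make the paper work hardest (Lemmas \ref{lem.w2x}--\ref{lem.hessian} and \ref{lem.sqrtx}).
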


We stress the fact that although the diffusion coefficients $D_{ij}$ are
constant, the diffusion matrix of the inverted Maxwell-Stefan system
(see \eqref{1.aux} below)
depends on the mass densities in a nonlinear way. Note that the same existence
result holds when we allow for reaction terms in \eqref{1.rho} which are
locally Lipschitz continuous and quasi-positive; see \cite{Bot11,JuSt13}.

The key ideas of the proof are as follows. First, we write \eqref{1.nax}
more compactly as $\na x=A(\rho)J$, where $x=(x_1,\ldots,x_{N+1})$,
$\rho=(\rho_1,\ldots,\rho_{N+1})$, $J=(J_1,\ldots,J_{N+1})$,
and $A(\rho)$ is a matrix.
Using the Perron-Frobenius theory, Bothe \cite{Bot11}
proved that $A(\rho)$ can be inverted on its image.
As in \cite{JuSt13}, it turns out that it is more convenient to work
with the system in $N$ components by eliminating the last equation
in \eqref{1.rho}. We set $x'=(x_1,\ldots,x_N)$ and similarly
for the other vectors. Then, inverting $\na x'=-A_0(\rho)J'$
(Lemma \ref{lem.A0}), \eqref{1.rho} becomes
\begin{equation}\label{1.aux}
  \pa_t\rho' + (u\cdot\na)\rho' - \diver(A_0(\rho)^{-1}\na x') = 0.
\end{equation}
This equation can be analyzed by exploiting its entropy structure. Indeed,
we associate to this system the entropy density (or, more precisely,
Gibbs free energy)
\begin{equation}\label{1.h}
  h(\rho') = c\sum_{i=1}^{N+1}x_i(\ln x_i-1)+c,
\end{equation}
where $\rho_{N+1}=1-\sum_{i=1}^N\rho_i$ is interpreted as a function of the
other mass densities. We ``symmetrize'' \eqref{1.aux} by introducing
the entropy variables
\begin{equation}\label{1.w}
  w_i = \frac{\pa h}{\pa\rho_i} = \frac{\ln x_i}{M_i} - \frac{\ln x_{N+1}}{M_{N+1}},
	\quad i=1,\ldots,N,
\end{equation}
and set $w=(w_1,\ldots,w_N)$.
The second equality in \eqref{1.w} is shown in Lemma \ref{lem.w} below.
Denoting by $D^2h(\rho')$
the Hessian of $h$ with respect to $\rho'$, \eqref{1.aux} is equivalent to
\begin{equation}\label{1.aux2}
   \pa_t\rho' + (u\cdot\na)\rho' - \diver(B(w)\na w) = 0,
\end{equation}
where $B(w)=A_0^{-1}(\rho')(D^2h)^{-1}(\rho')$ is symmetric and positive
definite (Lemma \ref{lem.hessian}). This formulation reveals the parabolic
structure of the equations. The mass density vector $\rho'$ is interpreted
as a function of $w$. If all molar masses are equal, $M_i=M$, this function
can be written as $\rho_i(w) =
\exp(Mw_i)(1+\sum_{j=1}^{N}\exp(Mw_j))^{-1}$ \cite{JuSt13}, showing that
\begin{equation}\label{1.bounds}
  0 < \rho' < 1 \quad\mbox{and}\quad \sum_{i=1}^{N}\rho_i<1.
\end{equation}
This formulation is no longer possible if the molar masses are different.
In this situation, $\rho'$ is implicitly given as a function of $w$; there is
no explicit formula anymore. However, we are able to show that the mapping
$\rho'\mapsto w$, defined by \eqref{1.w} and $x_i=\rho_i/(cM_i)$,
can be inverted and that \eqref{1.bounds} still holds (Corollary \ref{coro.w2rho}).

The entropy $H(\rho')=\int_\Omega h(\rho')dz$ provides suitable a priori estimates.
Indeed, using $w$ as a test function in \eqref{1.aux2}, a computation
(see Lemma \ref{lem.sqrtx} and the proof of Theorem \ref{thm.ex})
shows the entropy-dissipation inequality
\begin{equation}\label{1.entdiss}
  \frac{dH}{dt} = -\int_\Omega \na w:B(w)\na w dz
	\le -C_B\sum_{i=1}^{N+1}\int_\Omega|\na\sqrt{x_i}|^2 dz,
\end{equation}
where the constant $C_B>0$ only depends on the diffusion coefficients $D_{ij}$
and the molar masses $M_i$ and the double point ``:'' signifies summation
over both matrix indices. The key point is that the integral
$\int_\Omega ((u\cdot\na)\rho')\cdot w dz$ in \eqref{1.aux2}
vanishes (Lemma \ref{lem.ent}).
This yields $H^1$ estimates for $\sqrt{x_i}$ from which we conclude $H^1$ bounds
for $\rho_i$ (Lemma \ref{lem.est34}).
We note that a diffusion inequality which directly implies the above
entropy-dissipation inequality was first established in \cite[Section 4]{Gio99a}.

The proof of Theorem \ref{thm.ex} is based on a semi-discretization in time
of both the Navier-Stokes equations \eqref{1.u} and Maxwell-Stefan equations
\eqref{1.aux2} with time step $\tau>0$, together with a regularization
using the operator $\eps(\Delta^2 w+w)$ in \eqref{1.aux2},
which guarantees the
coercivity in $w$. The existence of a solution to the approximate problem
is shown by means of the Leray-Schauder fixed-point theorem. The discrete
analogon of the entropy-dissipation inequality \eqref{1.entdiss} provides bounds
uniform in the approximation parameters $\tau$ and $\eps$. By weak compactness
and the Aubin lemma, this allows us to perform the limit $(\tau,\eps)\to 0$.

System \eqref{1.rho}-\eqref{1.nax} admits the homogeneous steady state
$\bar\rho_i^0=\mbox{meas}(\Omega)^{-1}\|\rho_i^0\|_{L^1(\Omega)}$ or
$\bar x_i^0=\bar\rho_i^0/(\bar c^0 M_i)$, where
$\bar c^0=\sum_{i=1}^{N+1}\bar\rho_i^0/M_i$. We prove that the solution to
\eqref{1.rho}-\eqref{1.nax}, constructed in Theorem \ref{thm.ex},
converges exponentially fast to this stationary state.
For this, we introduce the relative entropy
\begin{equation}\label{1.relent}
  H^*(\rho) = \sum_{i=1}^{N+1}\int_\Omega cx_i\ln\frac{x_i}{\bar x_i^0}dz.
\end{equation}

\begin{theorem}[Exponential decay]\label{thm.long}
Let the assumptions of Theorem \ref{thm.ex} hold and let $f=0$.
We assume that there exists $0<\eta<1$ such that $\rho_i^0\ge\eta$
for $i=1,\ldots,N+1$. Let $(u,\rho)$ be
the weak solution constructed in Theorem \ref{thm.ex}. Then there
exist constants $C>0$, only depending on $\rho_i^0$ and $M_i$, and
$\lambda>0$, only depending on $\Omega$ and $M_i$, such that for all $t>0$
and $i=1,\ldots,N+1$,
$$
  \|x_i(\cdot,t)-\bar x_i^0\|_{L^1(\Omega)} \le Ce^{-\lambda t}\sqrt{H^*(\rho^0)},
$$
where $x_i=\rho_i/(cM_i)$ with $c=\sum_{i=1}^{N+1}\rho_i/M_i$ and
$\bar x_i^0=\rho_i^0/(\bar c^0M_i)$ with $\bar c^0=\sum_{i=1}^{N+1}\bar\rho_i^0/M_i$.
\end{theorem}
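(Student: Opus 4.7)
The plan is to treat $H^*$ as a Lyapunov functional: derive an entropy-dissipation inequality of the form
\[
\frac{dH^*}{dt}\le -C_B\sum_{i=1}^{N+1}\int_\Omega|\nabla\sqrt{x_i}|^2\,dz,
\]
combine it with a convex Sobolev--type functional inequality
\[
H^*(\rho)\le C\sum_{i=1}^{N+1}\int_\Omega|\nabla\sqrt{x_i}|^2\,dz,
\]
to obtain exponential decay of $H^*$ itself via Gronwall, and finally apply a Csisz\'ar--Kullback--Pinsker inequality to convert the entropy decay into the $L^1$ bound on $x_i-\bar x_i^0$.

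For the dissipation step, I first observe that $H^*$ and $H$ differ by a functional that is linear in $\rho$ and conserved by the flow. Using $cx_i=\rho_i/M_i$ together with the mass-conservation identity $\int_\Omega\rho_i(\cdot,t)\,dz=\bar\rho_i^0|\Omega|$, which follows by integrating \eqref{1.rho} and using the no-flux and no-slip boundary conditions together with $\diver u=0$, one computes
\[
H^*(\rho)=H(\rho')-|\Omega|\sum_{i=1}^{N+1}\frac{\bar\rho_i^0}{M_i}\ln\bar x_i^0,
\]
so $dH^*/dt=dH/dt$ along solutions. Testing the weak formulation of \eqref{1.aux2} with $w$ as in the proof of Theorem \ref{thm.ex}, invoking Lemma \ref{lem.ent} to eliminate the convective term $\int(u\cdot\nabla)\rho'\cdot w\,dz$, and Lemma \ref{lem.sqrtx} to bound the quadratic form $\nabla w:B(w)\nabla w$ from below, then yields the claimed dissipation inequality. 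The Navier--Stokes coupling is absorbed entirely by Lemma \ref{lem.ent}, so the hypothesis $f=0$ is not actually used in the entropy identity itself.

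The central ingredient is the functional inequality $H^*(\rho)\le C\sum_i\int_\Omega|\nabla\sqrt{x_i}|^2\,dz$ with $C$ depending only on $\Omega$ and the $M_i$. I would first apply a logarithmic Sobolev inequality on $\Omega$ to bound $\int x_i\ln(x_i/\langle x_i\rangle)\,dz$ by $4C_{LS}\int|\nabla\sqrt{x_i}|^2\,dz$, then control the gap between the spatial average $\langle x_i\rangle$ and the target $\bar x_i^0$ by exploiting the pointwise constraint $\sum_ix_i=1$, the conservation law $\int cx_i\,dz=\bar c^0\bar x_i^0|\Omega|$, and the two-sided bounds $1/\max_jM_j\le c\le 1/\min_jM_j$ (from $\sum\rho_j=1$). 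The assumption $\rho_i^0\ge\eta$ ensures $\bar x_i^0\ge\eta_0>0$, keeping $\ln\bar x_i^0$ bounded. Once this is in hand, Gronwall gives $H^*(\rho(t))\le e^{-2\lambda t}H^*(\rho^0)$ for some $\lambda>0$ depending on $\Omega$ and the $M_i$. A pointwise Pinsker inequality applied to the probability vectors $(x_i(z,t))$ and $(\bar x_i^0)$ then gives $(\sum_i|x_i-\bar x_i^0|)^2\le 2\sum_ix_i\ln(x_i/\bar x_i^0)$; multiplying by $c$, integrating, and combining with Cauchy--Schwarz and the lower bound on $c$ produces the stated $L^1$ estimate.

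The hard part will be the functional inequality in the previous paragraph: in the equal-mass setting of \cite{JuSt13} one has $c\equiv 1$ and the individual spatial averages $\langle x_i\rangle$ are themselves conserved, so log-Sobolev applies directly to each species and the argument is essentially one line; in our setting $c$ varies in space and only the weighted integrals $\int cx_i\,dz$ are preserved, so disentangling the oscillations of $x_i$ from those of $c$ while exploiting the simplex constraint $\sum x_ix=1$ will require the bulk of the work, and is the reason the dependence of $C$ on $\rho_i^0$ (through $H^*(\rho^0)$ and the uniform lower bound $\eta$) enters the final estimate.
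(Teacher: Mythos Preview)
Your overall strategy---entropy dissipation, a log-Sobolev-type functional inequality, Gronwall, then Csisz\'ar--Kullback---is exactly the paper's. The substantive gap is in rigor: you work directly with the weak solution and propose to test \eqref{1.aux2} with $w$, but the weak solution constructed in Theorem~\ref{thm.ex} only satisfies $\rho_i\ge 0$, so $w_i=M_i^{-1}\ln x_i-M_{N+1}^{-1}\ln x_{N+1}$ need not be defined, let alone lie in the admissible test-function space; the chain-rule identity $dH^*/dt=dH/dt$ and hence the whole dissipation step are formal. The paper avoids this by proving everything at the level of the discrete/regularized problem \eqref{4.weak.w}, where $w^k\in\widetilde H^2(\Omega;\R^N)$ is a legitimate test function, and only then passing to the limit. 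At that level your convenient identity ``$H^*-H$ is a conserved constant'' fails, because the $\eps(\Delta^2 w+w)$ term destroys exact mass conservation: one has $\int_\Omega\rho_i^k\,dz=\int_\Omega\rho_i^0\,dz-\eps\tau\sum_{j\le k}\int_\Omega w_i^j\,dz$. Controlling these drift terms (Lemmas~\ref{lem.L1rho} and \ref{lem.L1w}) and the resulting discrepancy between the running averages $\bar x_i^k$ and the target $\bar x_i^0$ is precisely the technical content the paper flags in the introduction, and your proposal does not address it.

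On the functional inequality, you correctly identify the difficulty caused by the variable weight $c$. The paper's resolution is slightly different from what you sketch: rather than applying log-Sobolev to $x_i$ and then repairing the difference between $\langle x_i\rangle$ and $\bar x_i^0$, it applies log-Sobolev to $u=\sqrt{c^k x_i^k}$, which matches the weight in $H^*$ exactly, and then shows pointwise that $|\nabla\sqrt{c^k}|^2\le C\|\nabla\sqrt{x^k}\|^2$ via $c=(\sum_j M_jx_j)^{-1}$; this absorbs the $c$-oscillations directly into the dissipation without any averaging argument. The remaining error terms all come from $\bar x_i^k\neq\bar x_i^0$ and are handled by the $L^1$ estimates above. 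Your pointwise-Pinsker route for the final step is fine and essentially equivalent to the paper's Csisz\'ar--Kullback argument after using the two-sided bounds on $c$.
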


The proof is based on the entropy-dissipation inequality \eqref{1.entdiss}
by relating the
entropy dissipation with the entropy via the logarithmic Sobolev inequality
\cite[Remark 3.7]{Jue12}. Similarly as in \cite{JuSt13}, the difficulty of
the proof is that the approximate solution does not conserve the $L^1$ norm
because of the presence of the regularizing $\eps$-terms. The estimations
of these terms make the proof rather technical.

Compared to our previous work \cite{JuSt13}, the main novelties in this
paper are the coupling to the Navier-Stokes equations and the molar masses $M_i$
which are not equal. Because of the different molar
masses, we need to distinguish between the mass densities $\rho_i$ and the
molar fractions $x_i$, which makes necessary to derive some additional estimates.
In particular, the proof of the positive definiteness of the Hessian of $h$,
which implies the positive definiteness of $B(w)$, is rather involved
(see Lemma \ref{lem.hessian}).

The paper is organized as follows. In Section \ref{sec.prep}, we prove
some auxiliary results needed for the main proofs. In particular, we show
properties of the relations between $w$, $\rho$, and $x$ and of the
matrices $D^2 h$ and $B(w)$. The proofs of Theorems \ref{thm.ex}
and \ref{thm.long} are presented in Sections \ref{sec.ex} and \ref{sec.long},
respectively.


\section{Preparations}\label{sec.prep}

In this section, we show some auxiliary results which are used in the proofs
of the main theorems.

\subsection{Equivalent formulation of \eqref{1.rho} and \eqref{1.nax}}

We recall the notation $\rho=(\rho',\rho_{N+1})$, $\rho'=(\rho_1,\ldots,\rho_N)$
and similarly for $x$ and $J$, defined by $x_i=\rho_i/(cM_i)$
and $J_i=M_ij_i$ ($i=1,\ldots,N+1$). The matrix $\na\rho$ consists of the elements
$\pa\rho_i/\pa z_j$ ($1\le i\le N+1$, $1\le j\le d$), and we define similarly
$\na x$ and $\na w$. Then we can formulate \eqref{1.rho} and \eqref{1.nax}
more compactly as
\begin{equation}\label{3.aux1}
  \pa_t\rho + (u\cdot\na)\rho + \diver J = 0, \quad \na x = AJ,
\end{equation}
where the $(N+1)\times(N+1)$ matrix $A=A(\rho)=(A_{ij})$ is defined by
\begin{equation}\label{3.A}
\begin{aligned}
  A_{ij} &= d_{ij}\rho_i \quad\mbox{if }i\neq j,\ i,j=1,\ldots,N+1, \\
	A_{ij} &= -\sum_{k=1,\,k\neq i}^{N+1} d_{ik}\rho_k \quad\mbox{if }
	i=j=1,\ldots,N+1,
\end{aligned}
\end{equation}
and $d_{ij}=1/(c^2 M_iM_j D_{ij})$.
It is shown in \cite[Section 2]{JuSt13} that the system of
$N+1$ equations $\na x=AJ$
can be reduced to the first $N$ components, leading to
\begin{equation}\label{3.aux2}
   \pa_t\rho' + (u\cdot\na)\rho' + \diver J' = 0, \quad \na x' = A_0 J',
\end{equation}
where the $N\times N$ matrix $A_0=A_0(\rho')=(A_{ij}^0)$ is given by
\begin{equation}\label{3.A0}
\begin{aligned}
  A_{ij}^0 &= -(d_{ij}-d_{i,N+1})\rho_i \quad\mbox{if }i\neq j,\ i,j=1,\ldots,N, \\
	A_{ij}^0 &= \sum_{k=1,\,k\neq i}^N(d_{ik}-d_{i,N+1})\rho_k + d_{i,N+1}
	\quad\mbox{if }i=j=1,\ldots,N.
\end{aligned}
\end{equation}

\begin{lemma}\label{lem.A0}
The matrix $A_0$ is invertible and the elements of its inverse $A_0^{-1}$
are uniformly bounded in $\rho_1,\ldots,\rho_N\in[0,1]$.
\end{lemma}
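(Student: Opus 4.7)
The plan is to deduce invertibility of the $N\times N$ matrix $A_0$ from the rank structure of the full $(N+1)\times(N+1)$ matrix $A$ in the spirit of the Perron--Frobenius approach of \cite{Bot11}, and then to upgrade pointwise invertibility to a uniform bound on $A_0^{-1}$ by a continuity/compactness argument on the closed simplex $\Sigma=\{\rho'\in[0,1]^N:\rho_1+\cdots+\rho_N\le 1\}$ (with $\rho_{N+1}:=1-\sum_{i=1}^N\rho_i$, as dictated by the constraint $\sum_{i=1}^{N+1}\rho_i=1$).

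Two algebraic identities about $A$ drive everything. Using $d_{ij}=d_{ji}$ and \eqref{3.A}, a direct calculation shows that the columns of $A$ sum to zero: $\sum_{i=1}^{N+1}A_{ij}=A_{jj}+\sum_{i\neq j}d_{ji}\rho_i=-\sum_{k\neq j}d_{jk}\rho_k+\sum_{i\neq j}d_{ji}\rho_i=0$. A parallel computation yields $A\rho=0$, so $\rho$ itself lies in $\ker A$. Since $A^\top$ then has the structure of a (quasi-)generator of an irreducible continuous-time Markov chain whenever all components of $\rho$ are strictly positive, the Perron--Frobenius argument of \cite{Bot11} gives $\ker A=\mathrm{span}\{\rho\}$, i.e.\ $\mathrm{rank}(A)=N$ in the interior of $\Sigma$.

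The next step is to tie $A_0$ to $A$ via the constraint $\sum_{i=1}^{N+1}J_i=0$. Substituting $J_{N+1}=-\sum_{i=1}^N J_i$ into the first $N$ rows of $\nabla x=AJ$ and using $\rho_{N+1}=1-\sum_{i=1}^N\rho_i$ reproduces exactly the reduced system in \eqref{3.aux2} with $A_0$ as in \eqref{3.A0} (up to the sign convention). Injectivity of $A_0$ then follows by pullback: if $A_0 J'=0$, set $J_{N+1}:=-\sum_{i=1}^N J_i$; the first $N$ components of $AJ$ vanish by construction, and the $(N+1)$-th vanishes because the columns of $A$ sum to zero, so $AJ=0$. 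Hence $J\in\ker A=\mathrm{span}\{\rho\}$, and the combination $\sum_{i=1}^{N+1}J_i=0$ with $\sum_{i=1}^{N+1}\rho_i=1$ forces $J=0$, and thus $J'=0$.

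The last step is the uniform bound. The entries of $A_0(\rho')$ are polynomials in $\rho'$ on the compact set $\Sigma$, so continuous; the main obstacle is showing that $\det A_0$ does not vanish on the boundary of $\Sigma$, where the Perron--Frobenius argument degenerates because some components of $\rho$ are zero. I would handle this directly: if $\rho_i=0$ for some $i\le N$, then \eqref{3.A0} shows that the $i$-th row of $A_0$ collapses to $A_{ii}^0\,e_i^\top$ with $A_{ii}^0=\sum_{k\neq i,\,k=1}^{N+1}d_{ik}\rho_k+d_{i,N+1}\rho_i>0$ (since $\sum_{k\neq i}\rho_k=1$ implies not all $\rho_k$, $k\neq i$, vanish); expanding the determinant along this row peels off one species and leaves an $(N-1)\times(N-1)$ matrix of the same structure for the remaining components, so one concludes by induction on $N$. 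The base case $N=1$ is trivial since $A_0$ is then a positive scalar. Thus $\det A_0(\rho')\neq 0$ on all of $\Sigma$, and by continuity and compactness it is bounded below by a positive constant depending only on $D_{ij}$ and $M_i$; Cramer's rule then yields the claimed uniform bound on the entries of $A_0^{-1}$.
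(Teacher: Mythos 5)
Your route is genuinely different from the paper's. The paper's own proof is essentially a citation: it observes that $c$ is bounded above and below on the simplex (estimate \eqref{3.cmax}), so the coefficients $d_{ij}=1/(c^2M_iM_jD_{ij})$ are uniformly bounded in $\rho$, and then applies Lemma 2.3 of \cite{JuSt13} verbatim. You instead build a self-contained argument: kernel structure of $A$ via Perron--Frobenius as in \cite{Bot11}, pullback of $A_0J'=0$ to $AJ=0$, nonvanishing of $\det A_0$ on the closed simplex, and compactness plus Cramer's rule for the uniform bound. One small point in that spirit: the entries of $A_0$ are not polynomials in $\rho'$ here, because $d_{ij}$ depends on $\rho'$ through $c$; they are still continuous on $\Sigma$ since $c\ge(\max_iM_i)^{-1}>0$, which is precisely the observation the paper's proof makes before invoking \cite{JuSt13}, so this is cosmetic.

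There is, however, a genuine gap in your boundary analysis. The irreducibility needed for the Perron--Frobenius step requires \emph{all} $N+1$ components of $\rho$ to be strictly positive, while your induction only treats the faces $\{\rho_i=0\}$ with $i\le N$. The face $\rho_{N+1}=0$, i.e.\ $\sum_{i=1}^N\rho_i=1$ with $\rho_1,\dots,\rho_N>0$, is covered by neither case: there the $(N+1)$-st row of $A$ has all off-diagonal entries $d_{N+1,j}\rho_{N+1}=0$, so $A$ is reducible and the quoted kernel statement does not apply, and no row of $A_0$ collapses because no $\rho_i$ with $i\le N$ vanishes. The same face reappears at every level of your induction (deleting a species $i\le N$ still leaves species $N+1$ as the eliminated one), so the induction hypothesis as formulated inherits the gap; and continuity alone cannot close it, since nonvanishing of $\det A_0$ in the interior does not prevent the determinant from tending to zero at the boundary. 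The fix is short: at such a point the $(N+1)$-st row of $AJ=0$ reads $A_{N+1,N+1}J_{N+1}=0$ with $A_{N+1,N+1}=-\sum_{k\le N}d_{N+1,k}\rho_k<0$, forcing $J_{N+1}=0$; the remaining $N\times N$ block is again an irreducible quasi-positive matrix with kernel spanned by $(\rho_1,\dots,\rho_N)$, so your pullback argument ($J=\lambda\rho$ and $\sum_iJ_i=0$ imply $J=0$) still yields injectivity of $A_0$ there. With this additional case your argument is complete.
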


\begin{proof}
The definition $c=\sum_{i=1}^{N+1}\rho_i/M_i$ and the property $0\le\rho_i\le 1$
imply that
\begin{equation}\label{3.cmax}
  \left(\max_{1\le i\le N+1}M_i\right)^{-1} \le c \le
	\left(\min_{1\le i\le N+1}M_i\right)^{-1}.
\end{equation}
Hence, the coefficients $d_{ij}=1/(c^2 M_i M_j D_{ij})$ are bounded uniformly
in $\rho_k\in[0,1]$. Therefore, the proof of Lemma 2.3 in \cite{JuSt13}
applies, proving the result.
\end{proof}


\subsection{Entropy variables}\label{sec.entvar}

We recall the
relations $x_i=\rho_i/(cM_i)$, $c=\sum_{i=1}^{N+1}\rho_i/M_i$, and
$\sum_{i=1}^{N+1}\rho_i=1$. Since
$$
  x_{N+1} = \frac{\rho_{N+1}}{cM_{N+1}}
	= \frac{1}{cM_{N+1}}\left(1-\sum_{i=1}^N\rho_i\right), \quad
	c = \sum_{i=1}^N\frac{\rho_i}{M_i} + \frac{1}{M_{N+1}}
	\left(1-\sum_{i=1}^N\rho_i\right),
$$
we may interpret the entropy density \eqref{1.h} as a function of
$\rho'=(\rho_1,\ldots,\rho_N)$, which gives
\begin{align}
  h(\rho') &= c\sum_{i=1}^{N}x_i(\ln x_i-1)
	+ cx_{N+1}(\ln x_{N+1}-1) + c \nonumber \\
  &= \sum_{i=1}^{N}\frac{\rho_i}{M_i}\left(\ln\frac{\rho_i}{M_i}-1\right)
	+ \frac{\rho_{N+1}}{M_{N+1}}\left(\ln\frac{\rho_{N+1}}{M_{N+1}}-1\right)
	- c(\ln c-1), \label{3.hrho}
\end{align}
First, we prove that the entropy variables can be written as in \eqref{1.w}.

\begin{lemma}\label{lem.w}
The entropy variables $w_i=\pa h(\rho')/\pa\rho_i$ are given by
\begin{equation}\label{3.w}
  w_i = \frac{\ln x_i}{M_i} - \frac{\ln x_{N+1}}{M_{N+1}}, \quad i=1,\ldots,N.
\end{equation}
\end{lemma}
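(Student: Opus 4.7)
The plan is a direct differentiation of the expression for $h(\rho')$ in \eqref{3.hrho}, exploiting the fact that each summand has the convenient form $y(\ln y - 1)$, whose derivative with respect to $y$ is simply $\ln y$ (the $-1$ and the $y\cdot(1/y)$ from the product rule cancel). This identity is what makes the final answer clean, so I would emphasize it up front.

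First I would record the two chain-rule ingredients that will be used throughout: from $\rho_{N+1}=1-\sum_{j=1}^N\rho_j$ and $c=\sum_{j=1}^N\rho_j/M_j+(1-\sum_{j=1}^N\rho_j)/M_{N+1}$, one has
\[
  \frac{\pa \rho_{N+1}}{\pa \rho_i} = -1, \qquad
	\frac{\pa c}{\pa \rho_i} = \frac{1}{M_i} - \frac{1}{M_{N+1}}, \quad i=1,\ldots,N.
\]
Then I would differentiate \eqref{3.hrho} term by term. The first sum contributes $(1/M_i)\ln(\rho_i/M_i)$ from the $i$-th summand (and nothing from the others). The $\rho_{N+1}$-summand, via the chain rule and the $y(\ln y-1)\mapsto \ln y$ identity, contributes $-(1/M_{N+1})\ln(\rho_{N+1}/M_{N+1})$. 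The last term $-c(\ln c-1)$ contributes $-(1/M_i-1/M_{N+1})\ln c$, again using that identity applied to $y=c$.

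Adding the three contributions gives
\[
  w_i = \frac{1}{M_i}\bigl(\ln(\rho_i/M_i)-\ln c\bigr)
	     -\frac{1}{M_{N+1}}\bigl(\ln(\rho_{N+1}/M_{N+1})-\ln c\bigr),
\]
and the definitions $x_i=\rho_i/(cM_i)$ and $x_{N+1}=\rho_{N+1}/(cM_{N+1})$ collapse this to the stated formula \eqref{3.w}. There is no genuine obstacle in this lemma; the only point one has to be careful about is not to forget the chain-rule contribution through $c$, since it is easy to overlook that $c$ itself depends on $\rho'$ when the molar masses are distinct (in the equal-mass case $c=1/M$ is constant and that term drops out, so the computation is strictly more delicate here than in \cite{JuSt13}).
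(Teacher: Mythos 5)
Your proposal is correct and coincides with the paper's own proof: both differentiate \eqref{3.hrho} directly, using $\pa c/\pa\rho_i=1/M_i-1/M_{N+1}$ and $\pa\rho_{N+1}/\pa\rho_i=-1$ together with the fact that $y(\ln y-1)$ differentiates to $\ln y$, and then rewrite via $x_i=\rho_i/(cM_i)$. No discrepancies to report.
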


\begin{proof}
The proof is just a computation. Indeed, we infer from
$$
  \frac{\pa c}{\pa\rho_i} = \frac{\pa}{\pa\rho_i}\left(\sum_{k=1}^N\frac{\rho_k}{M_k}
	+ \frac{1}{M_{N+1}}\left(1-\sum_{k=1}^N\rho_k\right)\right)
	= \frac{1}{M_i}-\frac{1}{M_{N+1}}
$$
for $i=1,\ldots,N$ that
\begin{align*}
  \frac{\pa h}{\pa \rho_i}(\rho') &= \frac{1}{M_i}\ln\frac{\rho_i}{M_i}
	- \frac{1}{M_{N+1}}\ln\frac{\rho_{N+1}}{M_{N+1}}
	- \left(\frac{1}{M_i}-\frac{1}{M_{N+1}}\right)\ln c \\
	&= \frac{1}{M_i}\ln\frac{\rho_i}{cM_i}
	- \frac{1}{M_{N+1}}\ln\frac{\rho_{N+1}}{cM_{N+1}},
\end{align*}
and since $\rho_i/(cM_i)=x_i$, the conclusion follows.
\end{proof}

We claim that we can invert the mapping $x'\mapsto w$, defined by \eqref{3.w}.

\begin{lemma}\label{lem.w2x}
Let $w=(w_1,\ldots,w_N)\in\R^N$ be given. Then there exists a unique
$(x_1,\ldots,x_N)\in(0,1)^N$ satisfying $\sum_{i=1}^N x_i<1$
such that \eqref{3.w} holds with $x_{N+1}=1-\sum^N_{i=1}x_i$.
In particular, the mapping
$\R^N\to(0,1)^N$, $x'(w)=(x_1,\ldots,x_N)$, is bounded.
\end{lemma}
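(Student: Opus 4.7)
The plan is to reduce the $N$-dimensional system \eqref{3.w} to a single scalar equation in one unknown and then apply an elementary monotonicity argument. Solving \eqref{3.w} formally for $x_i$ yields
$$
  \ln x_i = M_i w_i + \frac{M_i}{M_{N+1}}\ln x_{N+1}, \qquad i=1,\ldots,N,
$$
so that $x_i = e^{M_i w_i}\,x_{N+1}^{M_i/M_{N+1}}$. Introducing the auxiliary variable $s = x_{N+1}^{1/M_{N+1}}>0$, I would write the constraint $\sum_{i=1}^{N+1}x_i = 1$ (using $x_{N+1}=1-\sum_{i=1}^N x_i$) as
$$
  F(s) := s^{M_{N+1}} + \sum_{i=1}^{N} e^{M_i w_i}\, s^{M_i} = 1.
$$

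Next I would observe that for fixed $w\in\R^N$, the function $F:[0,\infty)\to[0,\infty)$ is continuous, strictly increasing (each term is strictly increasing in $s$ since all $M_i>0$ and the coefficients $e^{M_iw_i}$ are positive), with $F(0)=0$ and $F(s)\to+\infty$ as $s\to\infty$. By the intermediate value theorem, there is a unique $s=s(w)>0$ with $F(s)=1$. Setting $x_{N+1}=s^{M_{N+1}}$ and $x_i = e^{M_iw_i}\,s^{M_i}$ for $i=1,\ldots,N$ then gives positive numbers whose sum equals $F(s)=1$, so each $x_i\in(0,1)$ and in particular $\sum_{i=1}^N x_i = 1 - x_{N+1}<1$. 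Relation \eqref{3.w} is satisfied by construction. Uniqueness of the $(x_1,\ldots,x_N)$ satisfying \eqref{3.w} and $\sum_{i=1}^{N+1}x_i=1$ follows because any such tuple determines $s = x_{N+1}^{1/M_{N+1}}$, and $s$ is uniquely pinned down as the unique positive zero of $F(\cdot)-1$.

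Finally, the boundedness of the map $w\mapsto x'(w)$ from $\R^N$ into $(0,1)^N$ is immediate from the pointwise bound $0<x_i<1$ established above, so no further estimate is required. The only mildly delicate point in the argument is verifying strict monotonicity and the correct asymptotic behavior of $F$, which is the step that forces the unique existence of $s$ and hence the entire lemma; since $F$ is a positive linear combination of pure monomials in $s$, this step is essentially automatic and I do not anticipate a real obstacle.
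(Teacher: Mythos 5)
Your proof is correct and follows essentially the same route as the paper: solve \eqref{3.w} for $x_i=e^{M_iw_i}x_{N+1}^{M_i/M_{N+1}}$, reduce the constraint $\sum_{i=1}^{N+1}x_i=1$ to a single scalar monotone equation, and conclude existence and uniqueness by the intermediate value theorem. The only difference is cosmetic — the paper works with $s=\sum_{i=1}^N x_i$ and a strictly decreasing fixed-point equation $f(s_0)=s_0$, while you substitute $s=x_{N+1}^{1/M_{N+1}}$ and find the unique root of a strictly increasing $F$ — and your handling of uniqueness and boundedness matches the paper's.
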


\begin{proof}
Introduce the function $f(s)=\sum_{i=1}^N (1-s)^{M_i/M_{N+1}}\exp(M_i w_i)$ for
$s\in [0,1]$. Then $f$ is strictly decreasing in $[0,1]$ and
$0=f(1)<f(s)<f(0)=\sum_{i=1}^N\exp(M_i w_i)$ for $s\in (0,1)$. By continuity,
there exists a unique fixed point $s_0\in(0,1)$, $f(s_0)=s_0$. Defining
$x_i=(1-s_0)^{M_i/M_{N+1}}\exp(M_i w_i)$ for $i=1,\ldots,N$, we infer that
$x_i>0$ and $\sum_{i=1}^N x_i=f(s_0)=s_0<1$.
Hence, in view of $x_{N+1}=1-s_0$, \eqref{3.w} holds.
\end{proof}

Given $\rho$, we can define $x_i=\rho_i/(cM_i)$, where
$c=\sum_{i=1}^{N+1}\rho_i/M_i$. The following lemma ensures that this mapping
is invertible.

\begin{lemma}\label{lem.x2rho}
Let $x'=(x_1,\ldots,x_{N})\in(0,1)^{N}$ and $x_{N+1}=1-\sum_{i=1}^Nx_i>0$ be given
and define for $i=1,\ldots,N+1$,
$$
  \rho_i(x') = \rho_i = cM_ix_i, \quad\mbox{where }
	c=\frac{1}{\sum_{k=1}^{N+1}M_kx_k}.
$$
Then $(\rho_1,\ldots,\rho_{N})\in(0,1)^{N}$ is the unique vector satisfying
$\rho_{N+1}=1-\sum_{i=1}^N\rho_i>0$,
$x_i=\rho_i/(cM_i)$ for $i=1,\ldots,N+1$, and $c=\sum_{k=1}^{N+1}\rho_k/M_k$.
\end{lemma}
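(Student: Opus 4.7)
The plan is to verify the three asserted properties directly from the explicit formula $\rho_i = cM_i x_i$ with $c = 1/\sum_{k=1}^{N+1} M_k x_k$, and then handle uniqueness by showing that the normalization $\sum_{i=1}^{N+1}\rho_i = 1$ pins down the value of $c$.

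First I would establish positivity and the bound $\rho_{N+1} > 0$. Since by hypothesis $x_i > 0$ for $i=1,\ldots,N+1$ (with $x_{N+1} = 1 - \sum_{i=1}^N x_i$), and $c>0$, each $\rho_i = cM_i x_i$ is strictly positive. The normalization then follows from a one-line computation,
$$
  \sum_{i=1}^{N+1}\rho_i \;=\; c\sum_{k=1}^{N+1} M_k x_k \;=\; 1,
$$
which yields $\rho_{N+1} = 1 - \sum_{i=1}^N \rho_i > 0$ and $\rho_i \in (0,1)$ for all $i$. The identity $x_i = \rho_i/(cM_i)$ is just a rearrangement of the definition, and summing $\rho_k/M_k = cx_k$ over $k$ together with $\sum_{k=1}^{N+1} x_k = 1$ recovers $c = \sum_{k=1}^{N+1} \rho_k/M_k$.

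For uniqueness, I would take any $(\tilde\rho_1,\ldots,\tilde\rho_N)\in(0,1)^N$ meeting the three conditions, set $\tilde c = \sum_{k=1}^{N+1} \tilde\rho_k/M_k$ and $\tilde\rho_{N+1} = 1 - \sum_{i=1}^N \tilde\rho_i$, and observe that $x_i = \tilde\rho_i/(\tilde c M_i)$ rearranges to $\tilde\rho_i = \tilde c M_i x_i$. Summing over $i=1,\ldots,N+1$ and using $\sum_{i=1}^{N+1} \tilde\rho_i = 1$ gives $\tilde c \sum_{k=1}^{N+1} M_k x_k = 1$, hence $\tilde c = c$ and therefore $\tilde\rho_i = \rho_i$.

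There is no real obstacle here; the lemma is essentially a bookkeeping check that the stated formula inverts the map $\rho \mapsto x$. The only subtlety to keep in mind is that both normalizations $\sum_{i=1}^{N+1} x_i = 1$ and $\sum_{i=1}^{N+1}\rho_i = 1$ must be invoked at the right moments — the former to verify the construction, the latter to pin down $\tilde c$ in the uniqueness step.
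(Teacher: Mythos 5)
Your proof is correct, and it differs from the paper's only in how uniqueness is handled. The paper's (very terse) argument first checks $\sum_{k=1}^{N+1}\rho_k/M_k=\sum_{k=1}^{N+1}cx_k=c$ exactly as you do, but then views the relations $\rho_i/(M_ix_i)=\sum_{k=1}^{N+1}\rho_k/M_k$, $i=1,\ldots,N$ (with $\rho_{N+1}=1-\sum_{j=1}^N\rho_j$ substituted) as a linear system in $(\rho_1,\ldots,\rho_N)$ and invokes Cramer's rule to conclude that it has the unique solution $\rho_i=M_ix_i/\sum_{k=1}^{N+1}M_kx_k$. You instead avoid any matrix argument: from $x_i=\tilde\rho_i/(\tilde c M_i)$ you get $\tilde\rho_i=\tilde c M_i x_i$ for all $i=1,\ldots,N+1$, and summing together with $\sum_{i=1}^{N+1}\tilde\rho_i=1$ pins down the single scalar $\tilde c=1/\sum_{k=1}^{N+1}M_kx_k=c$, whence $\tilde\rho_i=\rho_i$ componentwise. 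Your route is a bit more elementary and self-contained, since it does not require checking (or asserting) the nonvanishing of a determinant; the paper's route packages the same normalization information into the solvability of the $N\times N$ system. You also spell out the existence half (positivity, $\sum_{i=1}^{N+1}\rho_i=1$, and the two defining identities), which the paper leaves implicit; that is a harmless and slightly more complete presentation of the same bookkeeping.
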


The proof follows immediately from
$\sum_{k=1}^{N+1}\rho_k/M_k = \sum_{k=1}^{N+1}cx_k = c$, and the fact that
${\rho_i}/{M_ix_i}=\sum_{k=1}^{N+1}{\rho_k}/{M_k}$ for $i=1,2,...,N$
have unique solutions $\rho_i={M_ix_i}/{\sum_{k=1}^{N+1}M_kx_k}$ for
$i=1,2,...,N$ by applying Cramer's rule.

Combining Lemmas \ref{lem.w2x} and \ref{lem.x2rho}, we infer the following result.

\begin{corollary}\label{coro.w2rho}
Let $w=(w_1,\ldots,w_N)\in\R^N$ be given. Then there exists a unique vector
$(\rho_1,\ldots,\rho_N)\in(0,1)^N$ satisfying $\sum_{i=1}^N\rho_i<1$ such that
\eqref{3.w} holds for $\rho_{N+1}=1-\sum_{i=1}^N\rho_i$ and
$x_i=\rho_i/(cM_i)$ with $c=\sum_{i=1}^{N+1}\rho_i/M_i$.
Moreover, the mapping $\R^N\to(0,1)^N$, $\rho'(w)=(\rho_1,\ldots,\rho_N)$,
is bounded.
\end{corollary}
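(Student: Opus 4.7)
The plan is to obtain the corollary as the composition of the two inverse mappings constructed in Lemmas \ref{lem.w2x} and \ref{lem.x2rho}, so essentially no new analytic work is required; the task is to chain the two results carefully and verify that the composed data satisfy the full list of identities in \eqref{3.w} and $x_i=\rho_i/(cM_i)$ simultaneously.

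First, given $w=(w_1,\ldots,w_N)\in\R^N$, I apply Lemma \ref{lem.w2x} to produce a unique $x'=(x_1,\ldots,x_N)\in(0,1)^N$ with $\sum_{i=1}^N x_i<1$, and set $x_{N+1}=1-\sum_{i=1}^N x_i\in(0,1)$; by construction, \eqref{3.w} holds for this $x$. Then I feed $x'$ into Lemma \ref{lem.x2rho} to obtain $\rho_i=cM_i x_i$ with $c=1/\sum_{k=1}^{N+1}M_k x_k$, where the same lemma guarantees $(\rho_1,\ldots,\rho_N)\in(0,1)^N$, $\rho_{N+1}=1-\sum_{i=1}^N\rho_i\in(0,1)$, $x_i=\rho_i/(cM_i)$ for all $i=1,\ldots,N+1$, and $c=\sum_{k=1}^{N+1}\rho_k/M_k$. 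Substituting these relations back into \eqref{3.w} gives exactly the identity claimed in the statement, establishing existence.

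For uniqueness, suppose $(\rho_1,\ldots,\rho_N)\in(0,1)^N$ with $\sum \rho_i<1$ satisfies the conclusion of the corollary. Setting $x_i=\rho_i/(cM_i)$ with $c=\sum \rho_k/M_k$ and $x_{N+1}=1-\sum_{i=1}^N x_i$, the relation \eqref{3.w} is satisfied, so by the uniqueness statement in Lemma \ref{lem.w2x} these $x_i$ are uniquely determined by $w$. Then Lemma \ref{lem.x2rho} asserts that the $\rho_i$ are uniquely determined by the $x_i$ (via Cramer's rule, as explained in the remark following Lemma \ref{lem.x2rho}), hence $\rho'$ is uniquely determined by $w$.

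Finally, for the boundedness of $\rho'(w)$: from \eqref{3.cmax}, $c\le(\min_k M_k)^{-1}$, and since $x_i\in(0,1)$, we have $\rho_i=cM_i x_i<M_i/\min_k M_k$. A cleaner bound is already built into the construction, namely $\rho_i\in(0,1)$, so the image of the map $\R^N\to(0,1)^N$, $w\mapsto\rho'(w)$, is contained in the unit cube and therefore bounded. The main (mild) obstacle is bookkeeping: one must keep the two different parametrizations consistent and check that the $x_{N+1}$ and $\rho_{N+1}$ produced by the two lemmas are compatible, but this is immediate from $\sum_{i=1}^{N+1}x_i=1$ and $\sum_{i=1}^{N+1}\rho_i=1$, both enforced by Lemmas \ref{lem.w2x} and \ref{lem.x2rho} respectively.
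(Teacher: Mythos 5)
Your proposal is correct and matches the paper's argument: the paper itself obtains Corollary \ref{coro.w2rho} precisely by composing Lemmas \ref{lem.w2x} and \ref{lem.x2rho}, with the same consistency check $\sum_{i=1}^{N+1}x_i=\sum_{i=1}^{N+1}\rho_i/(cM_i)=1$ that you carry out. Your extra bookkeeping on uniqueness and the boundedness remark $\rho_i\in(0,1)$ are exactly what the paper leaves implicit.
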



\subsection{Hessian of the entropy density}\label{sec.hess}

We prove some properties of the Hessian
$(H_{ij})=(\pa^2 h(\rho')/\pa\rho_i\pa\rho_j)_{1\leq i,j\leq N}
=(\pa w_i/\pa\rho_j)_{1\leq i,j\leq N}$ and the matrix
$(G_{ij})=(\pa w_i/\pa x_j)_{1\leq i,j\leq N}$. Differentiating
\eqref{3.hrho} gives
$$
  H_{ij}
	= \frac{\delta_{ij}}{M_i\rho_i} + \frac{1}{M_{N+1}\rho_{N+1}}
	- \frac{1}{c}\left(\frac{1}{M_i}-\frac{1}{M_{N+1}}\right)
	\left(\frac{1}{M_j}-\frac{1}{M_{N+1}}\right), \quad i,j=1,\ldots,N,
$$
where $\delta_{ij}$ denotes the Kronecker delta.

\begin{lemma}\label{lem.hessian}
The matrix $(H_{ij})$ is symmetric and positive definite for all
$\rho_1,\ldots,\rho_N>0$ satisfying $\sum_{i=1}^N\rho_i<1$.
\end{lemma}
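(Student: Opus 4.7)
The symmetry of $(H_{ij})$ is immediate from the explicit formula, so the work lies in positive definiteness. My plan is to rewrite the quadratic form $\xi^\top H\xi$ (with $\xi=(\xi_1,\ldots,\xi_N)\in\R^N$) in a symmetric way in all $N+1$ components by introducing the auxiliary quantity $\xi_{N+1}:=-\sum_{i=1}^N\xi_i$, so that $\sum_{i=1}^{N+1}\xi_i=0$. With this notation, the term $\frac{1}{M_{N+1}\rho_{N+1}}(\sum_{i=1}^N\xi_i)^2$ becomes $\frac{\xi_{N+1}^2}{M_{N+1}\rho_{N+1}}$, and the linear combination $\sum_{i=1}^N(M_i^{-1}-M_{N+1}^{-1})\xi_i$ collapses to $\sum_{i=1}^{N+1}\xi_i/M_i$ using $\sum_{i=1}^{N+1}\xi_i=0$. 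A direct expansion then gives
\begin{equation*}
  \sum_{i,j=1}^N H_{ij}\xi_i\xi_j
  = \sum_{i=1}^{N+1}\frac{\xi_i^2}{M_i\rho_i}
  - \frac{1}{c}\Bigl(\sum_{i=1}^{N+1}\frac{\xi_i}{M_i}\Bigr)^{\!2}.
\end{equation*}

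The next step is to recognize the right-hand side as a Cauchy-Schwarz defect. Writing $\xi_i/M_i=(\xi_i/\sqrt{M_i\rho_i})\cdot\sqrt{\rho_i/M_i}$ and applying Cauchy-Schwarz,
\begin{equation*}
  \Bigl(\sum_{i=1}^{N+1}\frac{\xi_i}{M_i}\Bigr)^{\!2}
  \le \Bigl(\sum_{i=1}^{N+1}\frac{\xi_i^2}{M_i\rho_i}\Bigr)
      \Bigl(\sum_{i=1}^{N+1}\frac{\rho_i}{M_i}\Bigr)
  = c\sum_{i=1}^{N+1}\frac{\xi_i^2}{M_i\rho_i},
\end{equation*}
since $c=\sum_{i=1}^{N+1}\rho_i/M_i$. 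This shows $\xi^\top H\xi\ge 0$ at once.

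For strict positivity, I would examine the equality case. Equality in the Cauchy-Schwarz step forces $\xi_i=\lambda\rho_i$ for some $\lambda\in\R$ and all $i=1,\ldots,N+1$. Summing and using $\sum_{i=1}^{N+1}\rho_i=1$ together with the constraint $\sum_{i=1}^{N+1}\xi_i=0$ yields $\lambda=0$, hence $\xi_i=0$ for all $i$, in particular for $i=1,\ldots,N$. Therefore $\xi^\top H\xi>0$ whenever $\xi\neq 0$, which is exactly positive definiteness.

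The only mildly delicate point in the argument is the algebraic step that brings the quadratic form into the symmetric $N+1$-component expression displayed above; after that, the Cauchy-Schwarz inequality together with the identity $c=\sum_{i=1}^{N+1}\rho_i/M_i$ does all the work, and the hypothesis $\rho_1,\ldots,\rho_N>0$ with $\sum_{i=1}^N\rho_i<1$ is used precisely to guarantee that every $\rho_i$ (including $\rho_{N+1}=1-\sum_{i=1}^N\rho_i$) is strictly positive, so that the quantities $\xi_i^2/(M_i\rho_i)$ are well defined.
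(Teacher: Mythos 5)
Your argument is correct, and it is genuinely different from the paper's proof. The paper proceeds via Sylvester's criterion: it decomposes each column of the leading principal submatrix $H_k$, expands the determinant, regroups the resulting terms into three sums $I_1+I_2+I_3$, and after a somewhat lengthy rearrangement obtains the explicit strict lower bound \eqref{3.minor} for every $\det H_k$, from which positive definiteness follows. You instead work directly with the quadratic form: introducing $\xi_{N+1}=-\sum_{i=1}^N\xi_i$ (so that $\sum_{i=1}^{N+1}\xi_i=0$) symmetrizes the form into
\begin{equation*}
  \xi^\top H\xi=\sum_{i=1}^{N+1}\frac{\xi_i^2}{M_i\rho_i}
  -\frac{1}{c}\Bigl(\sum_{i=1}^{N+1}\frac{\xi_i}{M_i}\Bigr)^{\!2},
\end{equation*}
which I have checked is exactly what the explicit formula for $H_{ij}$ gives, and then the Cauchy--Schwarz inequality with weights $\rho_i/M_i$ together with $c=\sum_{i=1}^{N+1}\rho_i/M_i$ yields nonnegativity; the equality case forces $\xi_i=\lambda\rho_i$, and the two constraints $\sum_{i=1}^{N+1}\rho_i=1$ and $\sum_{i=1}^{N+1}\xi_i=0$ give $\lambda=0$, hence strict positivity for $\xi\neq 0$. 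Your route is shorter and more conceptual: it makes transparent that $H$ is a rank-one (Cauchy--Schwarz) correction of a positive diagonal form and that definiteness comes precisely from the constraints on $\rho$ and on the reduced variable $\xi_{N+1}$. What the paper's computation buys in exchange is quantitative information, namely the explicit lower bound on each principal minor, though that bound is not used elsewhere; for the purpose of Lemma \ref{lem.hessian} as stated, your argument fully suffices.
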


\begin{proof}
We claim that the principal minors $\det H_k$ of $(H_{ij})$ satisfy
\begin{equation}\label{3.minor}
  \det H_k > \frac{2}{cM_{N+1}\prod_{\ell=1}^k M_\ell}\left(
	\sum_{i,j=1,\,i<j}^k\frac{1}{\rho_{N+1}\prod_{\ell=1,\,\ell\neq i,j}^k\rho_\ell}
	+ \sum_{j=1}^k \frac{1}{\prod_{\ell=1,\,\ell\neq j}^k\rho_\ell}\right) > 0
\end{equation}
for $k=1,\ldots,N$. Then the positive definiteness of $(H_{ij})$ follows
from Sylvester's criterion.
It remains to prove \eqref{3.minor}. Since each column of $H_k$ can be written
for $j=1,2,...,k,$ as the difference
$$
  \begin{pmatrix}
	\delta_{1j}(M_1\rho_1)^{-1}+(M_{N+1}\rho_{N+1})^{-1} \\
	\vdots \\
	\delta_{kj}(M_k\rho_k)^{-1}+(M_{N+1}\rho_{N+1})^{-1}
	\end{pmatrix}
	- \frac{1}{c}\left(\frac{1}{M_j}-\frac{1}{M_{N+1}}\right)
	\begin{pmatrix}
	M_1^{-1}-M_{N+1}^{-1} \\
	\vdots \\
	M_k^{-1}-M_{N+1}^{-1}
	\end{pmatrix},
$$
a calculation shows that
\begin{align*}
  \det H_k &= \frac{1}{\prod_{\ell=1}^k M_\ell\rho_\ell}
	\left(\sum_{j=1}^k\frac{M_j \rho_j}{M_{N+1}\rho_{N+1}}+1\right) \\
	&\phantom{xx}{}- \frac{1}{c}\sum_{j=1}^k\left(\frac{1}{M_j}-\frac{1}{M_{N+1}}\right)
	\frac{1}{\prod_{\ell=1,\,\ell\neq j}^k M_\ell\rho_\ell} \\
	&\phantom{xx}{}\times\left(\sum_{i=1,\,i\neq j}^k\frac{M_i\rho_i}{M_{N+1}\rho_{N+1}}
	\left(\frac{1}{M_j}-\frac{1}{M_i}\right)
	+ \left(\frac{1}{M_j}-\frac{1}{M_{N+1}}\right)\right).
\end{align*}
Multiplying this expression by $c$ and rearranging the terms, we find that
\begin{align*}
  c\det H_k &= \left(\sum_{j=1}^k\frac{c}{M_{N+1}\rho_{N+1}
	\prod^{k}_{\ell=1,\,\ell\neq j}M_\ell\rho_\ell}
	+ \frac{c}{\prod_{\ell=1}^k M_\ell\rho_\ell}\right) \\
	&\phantom{xx}{}- \sum_{j=1}^k\left(\frac{1}{M_j}-\frac{1}{M_{N+1}}\right)^2
	\frac{1}{\prod_{\ell=1,\,\ell\neq j}^k M_\ell\rho_\ell} \\
	&\phantom{xx}{}-\sum_{j=1}^k\left(\frac{1}{M_j}-\frac{1}{M_{N+1}}\right)
	\sum_{i=1,\,i\neq j}^k\left(\frac{1}{M_j}-\frac{1}{M_i}\right)
	\frac{M_i\rho_i}{M_{N+1}\rho_{N+1}\prod_{\ell=1,\,\ell\neq j}^k M_\ell\rho_\ell} \\
	&= I_1+I_2+I_3.
\end{align*}
Recalling that $c=\sum_{\ell=1}^{N+1}\rho_\ell/M_\ell$, we can estimate as follows:
\begin{align*}
  I_1 &> \sum_{j=1}^k\frac{\sum_{i=1,\,i\neq j}^k\rho_i/M_i + \rho_{N+1}/M_{N+1}}{
	M_{N+1}\rho_{N+1}\prod_{\ell=1,\,\ell\neq j}^kM_\ell\rho_\ell}
	+ \frac{\sum_{j=1}^k\rho_j/M_j}{\prod_{\ell=1}^k M_\ell\rho_\ell} \\
	&= \sum_{j=1}^k \left(\sum_{i=1,\,i\neq j}^k\frac{1}{M_i^2 M_{N+1}\rho_{N+1}
	\prod_{\ell=1,\,\ell\neq i,j}^k M_\ell\rho_\ell}
	+ \frac{1}{M_{N+1}^2\prod_{\ell=1,\,\ell\neq j}^k M_\ell\rho_\ell}\right) \\
	&\phantom{xx}{}
	+ \sum_{j=1}^k\frac{1}{M_j^2\prod_{\ell=1,\,\ell\neq j}^k M_\ell\rho_\ell} \\
  &= \sum_{i,j=1,\,i<j}^k\frac{M_i^{-2}+M_j^{-2}}{M_{N+1}\rho_{N+1}
	\prod^k_{\ell=1,\,\ell\neq i,j}M_\ell\rho_\ell}
	+ \sum_{j=1}^k \frac{M_j^{-2}+M_{N+1}^{-2}}{\prod_{\ell=1,\,\ell\neq j}^k
	M_\ell\rho_\ell}.
\end{align*}
Using $\sum_{i,j}b_{ij}(a_j-a)(a_j-a_i)=\sum_{i<j}b_{ij}(a_j-a_i)^2$
for numbers $a$, $a_i\in\R$ and $b_{ij}=b_{ji}\in\R$,
the last term $I_3$ can be formulated as
$$
  I_3 = -\sum_{j=1}^k\sum_{i=1,\,i\neq j}^k\frac{(M_j^{-1}-M_{N+1}^{-1})
	(M_j^{-1}-M_i^{-1})}{M_{N+1}\rho_{N+1}\prod^k_{\ell=1,\,\ell\neq i,j}
	M_\ell\rho_\ell}
	= -\sum_{i,j=1,\,i<j}^k\frac{(M_i^{-1}-M_j^{-1})^2}{M_{N+1}\rho_{N+1}
	\prod_{\ell=1,\,\ell\neq i,j}^k M_\ell\rho_\ell}.
$$
Therefore, we infer that
\begin{align*}
  c\det H_k &> \sum_{i,j=1,\,i<j}^k\frac{2M_i^{-1}M_j^{-1}}{M_{N+1}\rho_{N+1}
	\prod^k_{\ell=1,\,\ell\neq i,j}M_\ell\rho_\ell}
	+ \sum_{j=1}^k\frac{2M_j^{-1}M_{N+1}^{-1}}{\prod_{\ell=1,\,\ell\neq j}^k
	M_\ell\rho_\ell} \\
	&= \frac{2}{M_{N+1}\prod_{\ell=1}^k M_\ell}\left(\sum_{i,j=1,\,i<j}^k
	\frac{1}{\rho_{N+1}\prod_{\ell=1,\,\ell\neq i,j}^k \rho_\ell}
	+ \sum_{j=1}^k\frac{1}{\prod_{\ell=1,\,\ell\neq j}^k \rho_\ell}\right),
\end{align*}
and \eqref{3.minor} follows.
\end{proof}

The coefficients $G_{ij}=\pa w_i/\pa x_j$ are given by
\begin{equation}\label{3.G}
  G_{ij} = \frac{1}{M_{N+1}x_{N+1}} + \frac{\delta_{ij}}{M_i x_i}
	= c\left(\frac{1}{\rho_{N+1}} + \frac{\delta_{ij}}{\rho_i}\right),
	\quad i,j=1,\ldots,N,
\end{equation}
since $x_i=\rho_i/(cM_i)$. We recall that $w(\rho')$
is computed in \eqref{3.w}.

\begin{lemma}\label{lem.G}
It holds for all $\rho_1,\ldots,\rho_N>0$ satisfying
$\rho_{N+1}=1-\sum_{i=1}^N\rho_i>0$:
\begin{enumerate}[\rm (i)]
\item The matrix $G(\rho')=(G_{ij})$ and its inverse $G^{-1}(\rho')$
are positive definite.
\item $\na w(\rho')=G(\rho')\na x'(\rho')$.
\item The elements of the $N\times N$ matrix $d\rho'/dx'=(\pa\rho_i/\pa x_k)$
are bounded by a constant which depends only on the molar masses $M_i$.
\item The $N\times N$ matrix $B(\rho')=A_0^{-1}(\rho')G^{-1}(\rho')$ is
symmetric, positive definite, and its elements are uniformly bounded.
\end{enumerate}
\end{lemma}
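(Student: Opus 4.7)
For part (i), the symmetry of $G$ is immediate from \eqref{3.G}, and the decomposition
\begin{equation*}
\xi^T G\xi = \frac{c}{\rho_{N+1}}\Bigl(\sum_{i=1}^N\xi_i\Bigr)^2 + c\sum_{i=1}^N\frac{\xi_i^2}{\rho_i}
\end{equation*}
shows $G$ is positive definite, whence so is $G^{-1}$. For part (ii), since $w_i = \ln x_i/M_i - \ln x_{N+1}/M_{N+1}$ with $x_{N+1} = 1 - \sum_{j=1}^N x_j$ depends explicitly on $x'$, direct differentiation recovers $G_{ij} = \pa w_i/\pa x_j$, and the chain rule gives $\na w = G\na x'$. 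For part (iii) I would write $\rho_i = M_i x_i/S$ with $S = \sum_{k=1}^{N+1} M_k x_k = M_{N+1} + \sum_{k=1}^N (M_k - M_{N+1}) x_k$, differentiate to obtain
\begin{equation*}
\frac{\pa\rho_i}{\pa x_k} = \frac{M_i\delta_{ik}}{S} - \frac{M_i x_i (M_k - M_{N+1})}{S^2},
\end{equation*}
and use $\min_j M_j \le S \le \max_j M_j$ (a convex combination) together with $x_i\in[0,1]$ to bound the entries by a constant depending only on the $M_j$.

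Part (iv) is the main content. Uniform boundedness of $B$ combines Lemma~\ref{lem.A0} with a Sherman--Morrison inversion of $G$: writing $G/c = \operatorname{diag}(1/\rho_i) + \rho_{N+1}^{-1}\mathbf 1\mathbf 1^T$ gives $(G^{-1})_{ij} = c^{-1}(\rho_i\delta_{ij} - \rho_{N+1}\rho_i\rho_j)$, which stays bounded for $\rho_i\in[0,1]$ because $c$ is bounded below by \eqref{3.cmax}. For symmetry and positive definiteness, the plan is to exploit the self-adjoint structure of the full $(N{+}1)$-dimensional Maxwell--Stefan matrix $A$ in \eqref{3.A}. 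Using $d_{ij}=d_{ji}$ one checks that the weighted inner product $\langle u,v\rangle_\rho := \sum_{i=1}^{N+1} u_i v_i/\rho_i$ makes $A$ self-adjoint on the hyperplane $\mathcal H = \{v:\sum_i v_i = 0\}$; completing the square yields
\begin{equation*}
\langle Au,u\rangle_\rho = -\sum_{i<j}\frac{d_{ij}}{\rho_i\rho_j}(\rho_j u_i - \rho_i u_j)^2,
\end{equation*}
which vanishes exactly when $u\propto(\rho_1,\ldots,\rho_{N+1})$, a direction outside $\mathcal H$ (since $\sum\rho_i=1\neq 0$). Hence $-A$ is strictly positive definite on $\mathcal H$.

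To transfer this to the reduced system, I would parameterize $\mathcal H$ by its first $N$ coordinates via $v_{N+1} = -\sum_{j=1}^N v_j$. A direct computation shows that under this identification the weighted inner product pulls back to the bilinear form with matrix $G/c$, while the restriction of $A$ to $\mathcal H$ pulls back, up to the sign convention already built into \eqref{3.A0}, to the matrix $A_0$. Consequently $GA_0$ is symmetric and (modulo sign) negative of a positive definite matrix, so $B = (GA_0)^{-1} = A_0^{-1}G^{-1}$ inherits symmetry and positive definiteness.

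The main obstacle is the careful sign and coordinate bookkeeping between the full matrix $A$, the reduced $A_0$ defined in \eqref{3.A0}, and the flux representation $-\diver(B\na w)$ appearing in \eqref{1.aux2}; each carries a sign convention that must be reconciled. The completion-of-squares identity for $\langle Au,u\rangle_\rho$ is the algebraic heart of the argument and is the same identity that underpins the entropy-dissipation inequality \eqref{1.entdiss}, so this work simultaneously sets up the pointwise lower bound $\na w:B\na w \ge C_B\sum_i|\na\sqrt{x_i}|^2$ used in the proof of Theorem~\ref{thm.ex}.
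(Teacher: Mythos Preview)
Your argument is correct and, for parts (i) and (iv), more self-contained than the paper's. For (i) the paper computes all principal minors of $G$ explicitly and invokes Sylvester's criterion, whereas your quadratic-form decomposition is shorter and more transparent. Parts (ii) and (iii) match the paper essentially verbatim. For (iv) the paper writes $G=cK$ with $K_{ij}=\rho_{N+1}^{-1}+\delta_{ij}\rho_i^{-1}$ and then \emph{cites} \cite[Lemma~2.4]{JuSt13} for the symmetry, positive definiteness, and uniform boundedness of $A_0^{-1}K^{-1}$; you instead unpack that citation by lifting to the full $(N{+}1)$-dimensional system, showing that $A$ is self-adjoint and negative definite on $\{\sum v_i=0\}$ with respect to $\langle u,v\rangle_\rho=\sum u_iv_i/\rho_i$, and pulling back. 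This is precisely the mechanism behind the cited lemma, so the two proofs agree in spirit; yours has the advantage of being self-contained and of making the identity $KA_0=(KA_0)^\top$ visible, while the paper's route is shorter on the page.

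Two small remarks. First, your Sherman--Morrison formula carries a stray factor: the correct inverse is $(G^{-1})_{ij}=c^{-1}(\rho_i\delta_{ij}-\rho_i\rho_j)$, without the extra $\rho_{N+1}$. This does not affect the boundedness conclusion. Second, your closing claim that the completion-of-squares identity already delivers the dissipation bound $\na w:B\na w\ge C_B\sum_i|\na\sqrt{x_i}|^2$ overstates things: the paper's Lemma~\ref{lem.sqrtx} uses a related but distinct symmetrization $\widetilde A_S=P^{-1/2}\widetilde A P^{1/2}$ with $P=\operatorname{diag}(M_ix_i)$ together with a spectral lower bound on $(-\widetilde A_S)^{-1}$, so some additional algebra is still needed to bridge the two identities.
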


\begin{proof}
(i) The explicit expression \eqref{3.G} shows that $G(\rho')$ is symmetric.
Since all principal minors $\det G_k$ of $G(\rho')$,
$$
  \det G_k = \frac{\sum_{i=1}^k M_i x_i + M_{N+1}x_{N+1}}{(\prod_{i=1}^k M_i x_i)
	M_{N+1}x_{N+1}}, \quad k=1,\ldots,N,
$$
are positive, Sylvester's criterion implies that $G(\rho')$ is positive definite.
Consequently, also $G^{-1}(\rho')$ is positive definite.

(ii)
We infer from \eqref{3.w} that
$$
  \nabla{w}_i=\frac{\nabla{x}_i}{M_ix_i}+\sum_{j=1}^N
  \frac{\nabla{x}_j}{M_{N+1}x_{N+1}}
  =\sum_{j=1}^N G_{ij}{\nabla{x}_j},\quad i=1,2,\ldots,N,
$$
and hence $\nabla{w}={G}(\rho')\nabla {x}'$.

(iii) By Lemma \ref{lem.x2rho}, it follows that
\begin{equation}\label{3.drhodx}
  \frac{\pa\rho_i}{\pa x_k}
	= cM_i\delta_{ik} - c^2M_ix_i(M_k-M_{N+1}), \quad i,k=1,\ldots,N,
\end{equation}
where $c=1/\sum_{j=1}^{N+1}M_jx_j$. The claim follows from the inequalities
$0<x_i<1$ and the bounds \eqref{3.cmax}.

(iv) We set $G(\rho')=c K(\rho')$, where the elements $K_{ij}$ of $K(\rho')$
are given by $K_{ij}=1/\rho_{N+1}+\delta_{ij}/\rho_i$ for $i,j=1,\ldots,N$.
In view of part (i) of the proof,
the matrix $K(\rho')$ is symmetric and positive definite, hence invertible.
Then, by Lemma 2.4 in \cite{JuSt13}, $A_0^{-1}(\rho')K^{-1}(\rho')$ is
symmetric and positive definite and its elements are uniformly bounded.
Consequently, the same holds for $B(\rho')=c^{-1}A_0^{-1}(\rho')
K^{-1}(\rho')$. This ends the proof.
\end{proof}

From Lemma \ref{lem.G} follows that
\begin{equation}\label{3.Bnaw}
  A_0^{-1}(\rho')\na x'(\rho') = \big(A_0^{-1}(\rho')G^{-1}(\rho')\big)
	(G(\rho')\na x'(\rho')) = B(\rho')\na w(\rho').
\end{equation}
We have shown at the end of Section \ref{sec.entvar} that $\rho'$ can be
interpreted as a function of $w$. Therefore, setting $B(w):=B(\rho'(w))$,
\eqref{1.aux} can be written as
\begin{equation}\label{3.eq.w}
  \pa_t\rho'(w) + (u\cdot\na)\rho'(w) - \diver(B(w)\na w) = 0.
\end{equation}
The boundary conditions are given by
\begin{equation}\label{3.bc}
  \na w_i\cdot\nu = 0\quad\mbox{on }\pa\Omega,\ t>0, \quad i=1,\ldots,N,
\end{equation}
since $\na\rho_j\cdot\nu=0$ on $\pa\Omega$ for all $j$ implies that
$$
  \na x_i\cdot\nu = \na\frac{\rho_i}{cM_i}\cdot\nu
	= \frac{\na\rho_i\cdot\nu}{cM_i}
	- \sum_{j=1}^{N+1}\frac{\rho_i\na\rho_j\cdot\nu}{c^2M_iM_j} = 0
$$
and thus $\na w_i\cdot\nu=(G(\rho')\na x)_i\cdot\nu = 0$ on $\pa\Omega$.


\subsection{Some estimates}

We show two results which are needed in the proof of the existence theorem.

\begin{lemma}\label{lem.ent}
Let $u\in\mathcal{V}$ and $w\in H^1(\Omega)$. Then
$$
  \int_\Omega((u\cdot\na)\rho'(w))\cdot wdz = 0.
$$
\end{lemma}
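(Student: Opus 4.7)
The plan is to recognize the integrand as $u\cdot\na\Phi$ for a bounded scalar $\Phi$, and then exploit the fact that $u\in\mathcal{V}$ is divergence-free with zero boundary trace. Since $w_i=\pa h(\rho')/\pa\rho_i$ by Lemma~\ref{lem.w}, the chain rule yields the pointwise identity
\begin{equation*}
((u\cdot\na)\rho'(w))\cdot w
=\sum_{i=1}^{N}w_i\,(u\cdot\na\rho_i)
=u\cdot\sum_{i=1}^{N}\frac{\pa h}{\pa\rho_i}(\rho')\,\na\rho_i
=u\cdot\na h(\rho'(w)),
\end{equation*}
so that the integral in question equals $\int_\Omega u\cdot\na h(\rho'(w))\,dz$.

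Next I would integrate by parts. Writing $u\cdot\na h=\diver(hu)-h\,\diver u$ and using $\diver u=0$ in $\Omega$ together with $u=0$ on $\pa\Omega$, the divergence theorem gives
\begin{equation*}
\int_\Omega u\cdot\na h(\rho'(w))\,dz
=\int_{\pa\Omega}h(\rho'(w))\,u\cdot\nu\,d\sigma
-\int_\Omega h(\rho'(w))\,\diver u\,dz=0,
\end{equation*}
which is the assertion.

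The only technical step is to justify these manipulations for $w\in H^1(\Omega)$. By Corollary~\ref{coro.w2rho}, the image of $w\mapsto\rho'(w)$ lies in the bounded set $(0,1)^N\cap\{\sum_i\rho_i<1\}$, so $\rho'(w)\in L^\infty(\Omega)$; since $h$ is continuous on the closed simplex (each term $(\rho_i/M_i)(\ln(\rho_i/M_i)-1)$ extends continuously to $[0,1]$), also $h(\rho'(w))\in L^\infty(\Omega)$. From the factorization $(D^2h(\rho'))^{-1}=(d\rho'/dx')\,G^{-1}(\rho')$ and the bounds in Lemma~\ref{lem.G}(iii)--(iv) (with $G^{-1}=c^{-1}K^{-1}$ uniformly bounded because $K\ge\operatorname{diag}(1/\rho_i)$), the Jacobian of $w\mapsto\rho'(w)$ is uniformly bounded on $\R^N$, so $\rho'(w)\in H^1(\Omega)$ whenever $w\in H^1(\Omega)$. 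The chain-rule identity above then holds in $L^1(\Omega)$ and the integration by parts is legitimate. I do not expect any deeper obstacle: the real content of the lemma is the algebraic identity $w_i=\pa_{\rho_i}h$, which encodes the entropy structure of the system and is precisely what makes the velocity-convective contribution cancel in the entropy balance.
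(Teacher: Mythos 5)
Your proof is correct, but it takes a genuinely different route from the paper. You observe that, since $w=Dh(\rho')$ by Lemma~\ref{lem.w}, the integrand is the perfect transport derivative $u\cdot\na\big(h(\rho'(w))\big)$ of the bounded scalar $h\circ\rho'\circ w$, and you kill the integral using $\diver u=0$ and the vanishing trace of $u\in\mathcal{V}$; the only technical input is that $w\mapsto\rho'(w)$ has a uniformly bounded Jacobian (your identity $(D^2h)^{-1}=(d\rho'/dx')G^{-1}$ together with Lemma~\ref{lem.G} is the right way to see this), so that $h(\rho'(w))\in W^{1,3/2}(\Omega)\cap L^\infty(\Omega)$ and the chain rule and integration by parts are legitimate (with $u\in L^6$, $\na h\in L^{3/2}$, the pairing is in $L^1$, and one can approximate $u$ by smooth divergence-free fields). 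The paper instead integrates by parts first, $\int_\Omega(u\cdot\na\rho_i)w_i\,dz=-\int_\Omega(u\cdot\na w_i)\rho_i\,dz$, and then uses the explicit formula \eqref{3.w} together with $\rho_i/M_i=cx_i$, $\sum_{i}x_i=1$ and $\rho_{N+1}/(M_{N+1}x_{N+1})=c$ to show that the two resulting integrals, both equal to $\sum_i\int_\Omega c\,u\cdot\na x_i\,dz$, cancel. Your argument is shorter and more conceptual (it works for any convex entropy density whose value is bounded on the range of $\rho'$, making transparent why the convective term drops out of the entropy balance), at the price of invoking a Sobolev chain rule for a composition whose $w$-gradient grows linearly, which you justify adequately; the paper's computation avoids that abstraction but is itself slightly formal (e.g.\ it integrates $u\cdot\na\ln x_{N+1}$ by parts), and in the place where the lemma is actually applied one has $w\in\widetilde H^2(\Omega;\R^N)\subset L^\infty$, so these technical points are harmless in either version.
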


\begin{proof}
Using $\diver u=0$, the characterization \eqref{3.w} of $w_i$,
and $\rho_i/M_i=cx_i$, we obtain after an integration by parts,
\begin{align*}
  \int_\Omega((u\cdot\na)\rho'(w))\cdot w dz
	&= \sum_{i=1}^N\int_\Omega (u\cdot\na\rho_i(w))w_i dz
	= -\sum_{i=1}^N\int_\Omega (u\cdot\na w_i)\rho_i(w) dz \\
	&= -\sum_{i=1}^N\int_\Omega \rho_i(w)u\cdot\left(\frac{\na x_i}{M_ix_i}
	- \frac{\na x_{N+1}}{M_{N+1}x_{N+1}}\right)dz \\
	&= -\sum_{i=1}^N\int_\Omega cu\cdot\na x_i dz
	+ \int_\Omega\sum_{i=1}^N\rho_i \frac{u\cdot\na x_{N+1}}{M_{N+1}x_{N+1}}dz.
\end{align*}
Because of $\sum_{i=1}^N \rho_i=1-\rho_{N+1}$ and $\rho_{N+1}/(M_{N+1}x_{N+1})=c$,
the last integral equals
\begin{align*}
  \int_\Omega \frac{1-\rho_{N+1}}{M_{N+1}x_{N+1}}u\cdot\na x_{N+1}dz
	&= \frac{1}{M_{N+1}}\int_\Omega u\cdot\na(\ln x_{N+1})dz
	- \int_\Omega cu\cdot\na x_{N+1}dz \\
	&= \sum_{i=1}^N\int_\Omega cu\cdot\na x_i dz,
\end{align*}
where we integrated by parts and used $\diver u=0$ and $x_{N+1}=1-\sum_{i=1}^N x_i$.
This shows the lemma.
\end{proof}

In the following, we employ the notation $f(x)=(f(x_1),\ldots,f(x_{N+1}))$
for vectors $x=(x_1,\ldots,x_{N+1})$ and arbitrary functions $f$.

\begin{lemma}\label{lem.sqrtx}
Let $w\in H^1(\Omega)$. Then there exists a constant $C_B>0$, only
depending on the coefficients $D_{ij}$ and $M_i$ such that
$$
  \int_\Omega \na w:B(w)\na wdz \ge C_B\int_\Omega|\na\sqrt{x}|^2 dz.
$$
\end{lemma}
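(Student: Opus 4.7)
The plan is to reduce $\nabla w : B(w)\nabla w$ to the classical Maxwell--Stefan entropy production and then apply a Cauchy--Schwarz argument to compare it to $\sum_i |\nabla\sqrt{x_i}|^2$.

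First, by \eqref{3.Bnaw} and the reduction \eqref{3.aux2} we have $B(w)\nabla w = A_0^{-1}\nabla x' = -J'$, the sign coming from the definition \eqref{3.A0} of $A_0$. Combining this with the explicit form \eqref{3.w}, $\nabla w_i = \nabla x_i/(M_ix_i) - \nabla x_{N+1}/(M_{N+1}x_{N+1})$, and the conservation $\sum_{i=1}^{N+1}J_i = 0$ (which follows from $\sum_i M_i j_i=0$ and absorbs the $(N+1)$st contribution into a symmetric sum), a short computation gives
\begin{equation*}
\nabla w : B(w)\nabla w \;=\; -\sum_{i=1}^{N+1}\frac{j_i\cdot\nabla x_i}{x_i}, \qquad j_i := J_i/M_i.
\end{equation*}
Multiplying the Maxwell--Stefan relation $c\nabla x_i = -\sum_{k\ne i}(x_k j_i - x_i j_k)/D_{ik}$ by $j_i/x_i$, summing in $i$, and symmetrizing in $(i,k)$ using $D_{ik}=D_{ki}$ yields the pointwise identity
\begin{equation*}
\nabla w : B(w)\nabla w \;=\; \frac{1}{2c}\sum_{i\ne k}\frac{x_ix_k}{D_{ik}}\left|\frac{j_i}{x_i}-\frac{j_k}{x_k}\right|^2,
\end{equation*}
which is the classical Maxwell--Stefan dissipation and confirms the non-negativity already known from Lemma~\ref{lem.G}(iv).

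To bring $|\nabla\sqrt{x_i}|^2 = |\nabla x_i|^2/(4x_i)$ into play, I rewrite Maxwell--Stefan as $-c\nabla x_i/x_i = \sum_{k\ne i}(x_k/D_{ik})(v_i - v_k)$ with $v_j := j_j/x_j$ and apply Cauchy--Schwarz together with $\sum_k x_k \le 1$:
\begin{equation*}
\frac{c^2|\nabla x_i|^2}{x_i^2} \;\le\; \frac{1}{D_{\min}}\sum_{k\ne i}\frac{x_k}{D_{ik}}|v_i-v_k|^2, \qquad D_{\min} := \min_{i\ne k} D_{ik}.
\end{equation*}
Multiplying by $x_i$ and summing in $i$ reassembles the double sum of the previous display, giving
\begin{equation*}
4c^2\sum_{i=1}^{N+1}|\nabla\sqrt{x_i}|^2 \;\le\; \frac{2c}{D_{\min}}\,\nabla w : B(w)\nabla w.
\end{equation*}
The lower bound $c \ge 1/\max_i M_i$ from \eqref{3.cmax} then yields the pointwise inequality with $C_B := 2D_{\min}/\max_i M_i$, and integration over $\Omega$ concludes the proof.

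The main obstacle is establishing the first identity: one must carefully pass between the reduced $N$-component bilinear form in which $B(w)$ is defined and the $(N+1)$-component Maxwell--Stefan dissipation, correctly tracking the sign produced by \eqref{3.A0} and using $\sum_i J_i = 0$ to fold the $(N+1)$st term into a fully symmetric sum. Once this identity is secured, the remaining Cauchy--Schwarz step is routine and uses only the elementary bounds $0\le x_i \le 1$ and \eqref{3.cmax}.
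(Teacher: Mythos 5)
Your proof is correct, but it takes a genuinely different route to the quantitative lower bound than the paper does. Both arguments share the same first step: identifying $B(w)\na w$ with the (negative of the) full $(N+1)$-component flux, i.e.\ checking that if $r'=B(w)\na w=A_0^{-1}\na x'$ and $r_{N+1}=-\sum_{i\le N}r_i$, then $\na x=-\widetilde A r$, so that the genuine Maxwell--Stefan relations hold pointwise for $J:=-r$; this is exactly the block computation the paper carries out, and since you only assert it as ``a short computation'' you should write it out (note also that the sign you need is the one implicit in \eqref{1.aux}, whereas \eqref{3.aux2} as printed carries the opposite convention, so be explicit about which reduction you use). After that the two proofs diverge: the paper rewrites the form as $\na s:(-\widetilde A)^{-1}\na x$, symmetrizes with $P^{1/2}=\mathrm{diag}((M_ix_i)^{1/2})$, and invokes the spectral bound from \cite[Lemma 2.2]{JuSt13} (Perron--Frobenius theory) that the smallest eigenvalue of $(-\widetilde A_S)^{-1}$ is bounded below by a constant $C_0$ depending only on $(D_{ij})$; you instead pass to the classical pairwise dissipation identity $\frac{1}{2c}\sum_{i\ne k}\frac{x_ix_k}{D_{ik}}|v_i-v_k|^2$ and then use weighted Cauchy--Schwarz together with $\sum_k x_k=1$ and $c\ge 1/\max_iM_i$. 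Your route is more elementary and self-contained (no spectral estimate on the restricted operator is needed) and it produces the explicit constant $C_B=2D_{\min}/\max_iM_i$, in the spirit of the diffusion inequality of \cite{Gio99a}; the paper's route is shorter on paper because it delegates the coercivity to the earlier lemma and keeps the matrix $(-\widetilde A_S)^{-1}$ available for reuse. The only points to watch in your version are the a.e.\ positivity of the $x_i$ (guaranteed by Corollary \ref{coro.w2rho}, so the divisions by $x_i$ are licit) and the symmetry $D_{ik}=D_{ki}$ used in the symmetrization, both of which you invoke correctly.
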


\begin{proof}
We follow the proof of Lemma 3.2 in \cite{JuSt13}. In contrast to that proof,
we have to take into account the different molar masses $M_i$ which complicates
the analysis. First, we claim that
$$
  \na w:B(w)\na w = \na s:(-\widetilde A)^{-1}\na x,
$$
where $s=(\ln x_1/M_1,\ldots,\ln x_{N+1}/M_{N+1})$ and
$\widetilde A=A|_{\text{im}(A)}$. To prove this claim, we set
$r'=(r_1,\ldots,r_N)^\top
= B(w)\na w\in\R^{N\times d}$ and $r_{N+1}=-\sum_{i=1}^N r_i\in\R^d$. Then,
by \eqref{3.w},
\begin{equation}\label{3.aux4}
  \na w:B(w)\na w = \sum_{i=1}^N\left(\frac{\na\ln x_i}{M_i}
	-\frac{\na\ln x_{N+1}}{M_{N+1}}
	\right)\cdot r_i = \sum_{i=1}^{N+1}\frac{\na \ln x_i}{M_i}\cdot r_i = \na s:r,
\end{equation}
where $r=(r',r_{N+1})^\top$. By \eqref{3.Bnaw}, $\na x'=A_0r'$, and the definitions
\eqref{3.A} and \eqref{3.A0} of $A$ and $A_0$, respectively,
we obtain for $i=1,\ldots,N$,
$$
  \na x_i = \sum_{j=1,\,j\neq i}^N(d_{ij}-d_{i,N+1})
	(\rho_j r_i^\top-\rho_i r_j^\top)
	+ d_{i,N+1}r_i^\top = (-Ar)_i = (-\widetilde Ar)_i,
$$
since $\mbox{im}(A)=(\mbox{span}(1,\ldots,1))^\perp$ and each column of $r$
is an element of $\mbox{im}(A)$.
Moreover, each column of $\widetilde Ar$ is also an element of $\mbox{im}(A)$,
so that
$$
  (-\widetilde Ar)_{N+1} = -\sum_{i=1}^N(-\widetilde Ar)_i
	= -\sum_{i=1}^N\na x_i = \na x_{N+1}.
$$
Therefore, $\na x = -\widetilde Ar$. It is shown in \cite[Lemma 2.2]{JuSt13}
that $\widetilde A$ is invertible. Thus, $r=(-\widetilde A)^{-1}\na x$, and
inserting this expression into \eqref{3.aux4} proves the claim.

Next, we introduce the symmetric matrix
$\widetilde A_S=P^{-1/2}\widetilde AP^{1/2}$, where
$$
  P^{1/2} = \mbox{diag}((M_1x_1)^{1/2},\ldots,(M_{N+1}x_{N+1})^{1/2}).
$$
Then $(-\widetilde A_S)^{-1}=P^{-1/2}(-\widetilde A)^{-1}P^{1/2}$.
Arguing similarly as in \cite[Lemma 2.2]{JuSt13}, we find that
$(-\widetilde A_S)^{-1}$
is a self-adjoint endomorphism whose smallest eigenvalue is bounded from below
by some positive constant, say $C_0>0$, which depends only on $(D_{ij})$.
This gives
\begin{align*}
  \na w & :B(w)\na w = \na s:(-\widetilde A)^{-1}\na x \\
	&= 4\na\sqrt{x}:\mbox{diag}\big(M_1^{-1}x_1^{-1/2},\ldots,
	M_{N+1}^{-1}x_{N+1}^{-1/2}\big)
	(-\widetilde A)^{-1}\mbox{diag}\big(x_1^{1/2},\ldots,x_{N+1}^{1/2}\big)
	\na\sqrt{x} \\
	&= 4\na\sqrt{x}:\left(\mbox{diag}\big(M_1^{-1}x_1^{-1/2},\ldots,
	M_{N+1}^{-1}x_{N+1}^{-1/2}\big)P^{1/2}\right)
	(P^{-1/2}(-\widetilde A)^{-1}P^{1/2}) \\
	&\phantom{xx}{}\times\left(P^{-1/2}\mbox{diag}
	\big(x_1^{1/2},\ldots,x_{N+1}^{1/2}\big)\right)\na\sqrt{x} \\
	&= 4\na\sqrt{x}:\mbox{diag}\big(M_1^{-1/2},\ldots,M_{N+1}^{-1/2}\big)
	(-\widetilde A_S)^{-1}\mbox{diag}\big(M_1^{-1/2},\ldots,M_{N+1}^{-1/2}\big)
	\na\sqrt{x} \\
	&\ge C_0\big|\mbox{diag}\big(M_1^{-1/2},\ldots,M_{N+1}^{-1/2}\big)
	\na\sqrt{x}\big|^2 \\
	&\ge C_B|\na\sqrt{x}|^2,
\end{align*}
where $C_B=C_0(\max_{1\le i\le N+1}M_i)^{-1/2}$.
\end{proof}


\section{Proof of Theorem \ref{thm.ex}}\label{sec.ex}

We say that $(u,\rho)$ is a weak solution
to \eqref{1.rho}-\eqref{1.bic} if for any
$v\in C_0^\infty(\Omega\times[0,T);\R^d)$ with $\diver v=0$,
\begin{align}
  -\int_0^T\int_\Omega & u\cdot\pa_t v dz\,dt
	+ \int_0^T\int_\Omega((u\cdot\na)u)\cdot v dz\,dt
	+ \int_0^T\int_\Omega \na u:\na v dz\,dt \nonumber \\
	&= \int_0^T\langle f,v\rangle dt + \int_\Omega u^0\cdot v(\cdot,0)dz,
	\label{4.weak1}
\end{align}
where $\langle\cdot,\cdot\rangle$ denotes the duality pairing between
$\mathcal{V}'$ and $\mathcal{V}$; and if for any
$q\in C^\infty_0(\overline{\Omega}
\times[0,T);\R^N)$ with $\na q\cdot\nu|_{\pa\Omega}=0$,
\begin{align}
  -\int_0^T\int_\Omega & \rho'\cdot\pa_t q dz\,dt
	+ \int_0^T\int_\Omega\na q:A_0^{-1}(\rho')\na x'(\rho')dz\,dt
	+ \int_0^T\int_\Omega((u\cdot\na)\rho')\cdot q dz\,dt \nonumber \\
	&= \int_\Omega (\rho^0)'\cdot q(\cdot,0)dz. \label{4.weak2}
\end{align}
The proof of Theorem \ref{thm.ex} is divided into several steps.

\subsection{Approximate problem}

Let $M\in\N$ and set $\tau=T/M$. Let $k\in\{1,\ldots,M\}$. Given
$(u^{k-1},w^{k-1})\in\mathcal{V}\times L^{\infty}(\Omega;\R^N)$,
we solve a regularized approximate problem for
\eqref{1.rho}-\eqref{1.nax}: For any $v\in\mathcal{V}$ and
$q\in \widetilde H^2(\Omega;\R^N)$:
\begin{align}
  & \int_\Omega\frac{u^k-u^{k-1}}{\tau}\cdot vdz
	+ \int_\Omega((u^{k-1}\cdot\na)u^k)\cdot v dz
	+ \int_\Omega\na u^k:\na vdz
	= \int_\Omega f^k\cdot vdz, \label{4.weak.u} \\
	& \int_\Omega\frac{\rho'(w^k)-\rho'(w^{k-1})}{\tau}\cdot q dz
	+ \int_\Omega\na q:A_0^{-1}(\rho'(w^k))\na x'(\rho'(w^k))dz \label{4.weak.rho} \\
	&\phantom{xx}{}+ \int_\Omega((u^k\cdot\na)\rho'(w^k))\cdot q dz
	+ \eps\int_\Omega(\Delta w^k\cdot\Delta q + w^k\cdot q)dz
	= 0, \nonumber
\end{align}
where $f^k=\tau^{-1}\int_{(k-1)\tau}^{k\tau}f(\cdot,t)dt\in\mathcal{V}'$
and $\rho'(w^k)$ is defined in Corollary \ref{coro.w2rho}.
Because of \eqref{3.Bnaw}, equation \eqref{4.weak.rho} is equivalent to
\begin{align}
	& \int_\Omega\frac{\rho'(w^k)-\rho'(w^{k-1})}{\tau}\cdot q dz
	+ \int_\Omega\na q:B(w^k)\na w^k dz \nonumber \\
	&\phantom{xx}{}+ \int_\Omega((u^k\cdot\na)\rho'(w^k))\cdot q dz
	+ \eps\int_\Omega(\Delta w^k\cdot\Delta q^k + w^k\cdot q)dz
	= 0, \label{4.weak.w}
\end{align}

Define for $0<\eta<1$ the space of bounded, strictly positive functions
$$
  Y_\eta = \Big\{q=(q_1,\ldots,q_N)\in L^\infty(\Omega;\R^N):
	q_i\ge\eta\mbox{ for }i=1,\ldots.N,\
	q_{N+1}=1-\sum_{i=1}^N q_i\ge\eta\Big\}.
$$

\begin{lemma}\label{lem.approx}
Let $\eta^{k-1}\in(0,1)$ and
$(u^{k-1},\rho^{k-1})\in\mathcal{V}\times Y_{\eta^{k-1}}$ with
$\rho^{k-1}=\rho'(w^{k-1})$.
Then there exist $\eta^k\in(0,1)$ and $(u^k,w^k)\in\mathcal{V}
\times\widetilde H^2(\Omega;\R^N)$
which solves \eqref{4.weak.u}-\eqref{4.weak.rho} satisfying $\rho'(w^k)\in
Y_{\eta^k}$.
\end{lemma}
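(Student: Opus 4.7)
The plan is to decouple the system and attack the two equations separately. Since \eqref{4.weak.u} is linear in $u^k$ with coefficients depending only on the given $u^{k-1}\in\mathcal V$, the bilinear form
\[
  a(u,v)=\tau^{-1}\int_\Omega u\cdot v\,dz + \int_\Omega((u^{k-1}\cdot\na)u)\cdot v\,dz+\int_\Omega\na u:\na v\,dz
\]
on $\mathcal V\times\mathcal V$ is continuous, and coercive because the convective trilinear term vanishes on the diagonal thanks to $\diver u^{k-1}=0$ and $u=0$ on $\pa\Omega$. Lax--Milgram then yields a unique $u^k\in\mathcal V$.

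Taking this $u^k$ as given, the remaining task is to find $w^k\in\widetilde H^2(\Omega;\R^N)$ solving \eqref{4.weak.w} (which by \eqref{3.Bnaw} is equivalent to \eqref{4.weak.rho}). I would do this by Leray--Schauder applied to the map $F:L^\infty(\Omega;\R^N)\to L^\infty(\Omega;\R^N)$, $F(\tilde w)=w$, where $w\in\widetilde H^2(\Omega;\R^N)$ is the unique solution of the \emph{linearized} problem
\begin{align*}
  & \eps\int_\Omega(\Delta w\cdot\Delta q + w\cdot q)\,dz + \int_\Omega\na q:B(\tilde w)\na w\,dz \\
  &\quad = -\tfrac{1}{\tau}\int_\Omega(\rho'(\tilde w)-\rho'(w^{k-1}))\cdot q\,dz - \int_\Omega((u^k\cdot\na)\rho'(\tilde w))\cdot q\,dz
\end{align*}
for all $q\in\widetilde H^2(\Omega;\R^N)$. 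By Corollary \ref{coro.w2rho} and Lemma \ref{lem.G}(iv), $B(\tilde w)$ is well defined and bounded whenever $\tilde w\in L^\infty$; the $\eps$-regularization together with elliptic regularity for the Neumann problem (built into the boundary condition $\na w\cdot\nu=0$ of $\widetilde H^2$) makes the left-hand side coercive in the full $H^2$ norm, so Lax--Milgram applies. Continuity of $F$ reduces to continuous dependence of the solution on $\tilde w$, which uses the continuity of $w\mapsto\rho'(w)$ and $w\mapsto B(w)$ established in Section \ref{sec.prep}; compactness of $F:L^\infty\to L^\infty$ follows because $H^2(\Omega)$ is compactly embedded in $L^\infty(\Omega)$ for $d\le 3$.

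The key step is the uniform a priori bound on all fixed points $w=\sigma F(w)$, $\sigma\in[0,1]$. Such a $w$ satisfies the above equation with $\tilde w=w$ and the right-hand side multiplied by $\sigma$. Testing with $q=w$ and combining three ingredients from Section \ref{sec.prep} — convexity of $h$ (Lemma \ref{lem.hessian}), which yields $(\rho'(w)-\rho'(w^{k-1}))\cdot w\ge h(\rho'(w))-h(\rho'(w^{k-1}))$ via $w=\na h(\rho'(w))$; Lemma \ref{lem.ent}, which makes the convective term vanish since $u^k\in\mathcal V$; and Lemma \ref{lem.sqrtx}, which gives $\int_\Omega\na w:B(w)\na w\,dz\ge 0$ — one obtains
\[
  \eps\bigl(\|\Delta w\|_{L^2}^2+\|w\|_{L^2}^2\bigr) \le \frac{\sigma}{\tau}\left(\int_\Omega h(\rho'(w^{k-1}))\,dz - \int_\Omega h(\rho'(w))\,dz\right).
\]
Because $y(\ln y-1)$ is bounded below on $(0,1]$ and $c$ is bounded by \eqref{3.cmax}, $h$ has a uniform lower bound, so the right-hand side is controlled by $\sigma/\tau$ times a constant depending only on $H(\rho^{k-1})$ and $\Omega$. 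This yields $\|w\|_{H^2}\le C(\eps,\tau,H(\rho^{k-1}))$ independently of $\sigma$, and Sobolev embedding converts it into an $L^\infty$ bound. Leray--Schauder then produces a fixed point $w^k$; applying the continuous map $\rho'(\cdot)$ of Corollary \ref{coro.w2rho} (whose values lie strictly inside $(0,1)^{N+1}$) to the compact set $\{|w|\le\|w^k\|_{L^\infty}\}$ yields the required $\eta^k\in(0,1)$ such that $\rho'(w^k)\in Y_{\eta^k}$.

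The main obstacle is exactly the coercive estimate above: without the simultaneous cancellation of the convective term by Lemma \ref{lem.ent}, the positivity of the cross-diffusion matrix guaranteed by Lemma \ref{lem.sqrtx}, and the convexity of the entropy supplied by Lemma \ref{lem.hessian}, one cannot close the energy identity, since the problem mixes a nonlinear, possibly degenerate cross-diffusion operator with a non-trivial divergence-free convection. Once these structural tools from Section \ref{sec.prep} are in place, the remaining work — Lax--Milgram for $u^k$ and for the linearized $w$-problem, continuity and compactness of $F$ — is routine.
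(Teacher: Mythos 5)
Your proposal is correct and follows essentially the same route as the paper: Lax--Milgram for the decoupled Navier--Stokes step, then a Leray--Schauder argument for the linearized, $\eps$-regularized $w$-problem, with the uniform fixed-point bound obtained by testing with $w$ and combining Lemma \ref{lem.ent}, the convexity of $h$ (Lemma \ref{lem.hessian}), and the positive (semi)definiteness of $B$. The only cosmetic differences are that the paper builds the homotopy parameter $\sigma$ into the right-hand side of the linearized problem rather than using the Schaefer form $w=\sigma F(w)$, and it invokes Lemma \ref{lem.G}(iv) instead of Lemma \ref{lem.sqrtx} for the sign of the diffusive term; neither changes the argument.
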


\begin{proof}
{\em Step 1.} By standard theory of the incompressible Navier-Stokes equations
\cite{Tem84}, there exists a unique solution $u\in\mathcal{V}$ to
\begin{equation}\label{4.LM}
  a_1(u,v) = F_1(v)\quad\mbox{for }v\in\mathcal{V},
\end{equation}
where for $u$, $v\in\mathcal{V}$,
\begin{align*}
  a_1(u,v) &= \frac{1}{\tau}\int_\Omega u\cdot vdz
	+ \int_\Omega((u^{k-1}\cdot\na)u)\cdot v dz
	+ \int_\Omega\na u:\na vdz, \\
	F_1(v) &= \frac{1}{\tau}\int_\Omega u^{k-1}\cdot vdz + \int_\Omega f^k\cdot v dz.
\end{align*}
Indeed, since $H^1(\Omega)\hookrightarrow L^4(\Omega)$ for $d\le 4$, we have
for some (generic) constant $C>0$,
$$
  \left|\int_\Omega((u^{k-1}\cdot\na)u)\cdot v dz\right|
	\le \|u^{k-1}\|_{L^4(\Omega)}\|\na u\|_{L^2(\Omega)}\|v\|_{L^4(\Omega)}
	\le C\|u\|_{H^1(\Omega)}\|v\|_{H^1(\Omega)},
$$
and using $\diver u^{k-1}=0$, it follows that
$$
  a_1(u,u) = \frac{1}{\tau}\int_\Omega|u|^2 dz + \int_\Omega\|\na u\|^2 dz
	\ge C\|u\|_{H^1(\Omega)}^2.
$$
Thus, $a_1(\cdot,\cdot)$ is a bounded, coercive bilinear form on
$\mathcal{V}$ and $F_1\in\mathcal{V}'$. By Lax-Milgram's lemma,
there exists a unique solution $u\in\mathcal{V}$ to \eqref{4.LM}.

{\em Step 2.} Let $u\in\mathcal{V}$ be the unique solution to \eqref{4.LM} and
let $\bar w\in L^{\infty}(\Omega;\R^N)$. Let $\sigma\in[0,1]$.
We prove that there exists a unique $w\in\widetilde H^2(\Omega;\R^N)$ to
\begin{equation}\label{4.LMw}
  a_2(w,q) = F_2(q)\quad\mbox{for } q\in\widetilde H^2(\Omega;\R^N),
\end{equation}
where for $w$, $q\in \widetilde H^2(\Omega;\R^N)$,
\begin{align*}
  a_2(w,q) &= \eps\int_\Omega(\Delta w\cdot\Delta q+w\cdot q)dz
	+ \int_\Omega \na q:B(\bar w)\na w dz	 \\
	F_2(q) &= -\frac{\sigma}{\tau}\int_\Omega(\rho'(\bar w)-\rho^{k-1})\cdot qdz
  +\sigma\int_\Omega((u\cdot\na)q)\cdot\rho'(\bar w)dz.
\end{align*}
We infer from Lemma \ref{lem.G} (iv) that $a_2(\cdot,\cdot)$ is a bounded
bilinear form on $\widetilde H^2(\Omega;\R^N)$, and from the positive
definiteness of $B(\bar w)$ (see also Lemma \ref{lem.G} {(iv)}) follows that
$$
  a_2(w,w) \ge \eps\int_\Omega(|\Delta w|^2+|w|^2)dz
	\ge C\|w\|_{H^{2}(\Omega)}^2.
$$
Since $\rho'(\bar{w})$ is a bounded function, by Corollary \ref{coro.w2rho},
we infer that $F_2$ is bounded on $\widetilde H^2(\Omega;\R^N)$.
Then the Lax-Milgram lemma provides the existence of a unique solution
$w\in\widetilde H^2(\Omega;\R^N)$ to \eqref{4.LMw}.

{\em Step 3.} This defines the fixed-point mapping $S:L^{\infty}(\Omega;\R^N)\times
[0,1]\to L^{\infty}(\Omega;\R^N)$, $S(\bar w,\sigma)=w$, where $w$ solves \eqref{4.LMw}.
By construction, $S(\bar w,0)=0$ for all $w\in L^{\infty}(\Omega;\R^N)$. Since the
embedding $H^2(\Omega)\hookrightarrow L^{\infty}(\Omega)$ is compact,
standard arguments show that $S$ is continuous and compact. It remains to prove
that there exists a constant $C>0$ such that $\|w\|_{L^{\infty}(\Omega)}\le C$
for all $(w,\sigma)\in L^{\infty}(\Omega;\R^N)\times[0,1]$ satisfying
$w=S(w,\sigma)$.

Let $w\in L^{\infty}(\Omega;\R^N)$ be such a fixed point.
Then it solves \eqref{4.LMw} with $\bar w$ replaced by $w$.
Taking $w\in \widetilde H^2(\Omega;\R^N)$ as a test function,
it follows from Lemma \ref{lem.ent} that
$$
  \frac{\sigma}{\tau}\int_\Omega(\rho'(w)-\rho'(w^{k-1}))\cdot w dz
  + \int_\Omega \na w:B(w)\na w dz + \eps\int_\Omega(|\Delta w|^2 + |w|^2)dz = 0.
$$
By Lemma \ref{lem.hessian}, the entropy density $h$, defined in \eqref{1.h},
is convex. This implies that $h(\rho'(w))-h(\rho^{k-1})\le (dh/d\rho')\cdot
(\rho'(w)-\rho^{k-1}) = w\cdot (\rho'(w)-\rho^{k-1})$
(see Lemma \ref{lem.w}). We infer from the positive definiteness of $B(w)$ (see
Lemma \ref{lem.G} (iv)) that
$$
  \sigma\int_\Omega h(\rho'(w))dz + \eps\tau\int_\Omega(|\Delta w|^2+|w|^2)dz
	\le \sigma\int_\Omega h(\rho^{k-1})dz.
$$
This yields the desired uniform $H^{2}$ bound and hence uniform $L^{\infty}$
bound for $w$.
By the Leray-Schauder fixed-point theorem, there exists a solution
$w\in \widetilde H^2(\Omega;\R^N)$ to \eqref{4.weak.w}.
According to Corollary \ref{coro.w2rho},
we can define $\rho_1(w),\ldots,\rho_N(w)>0$ satisfying $\rho_{N+1}(w):=
1-\sum_{i=1}\rho_i(w)>0$, and we set
$\eta^k=\min_{1\le i\le N+1}\mbox{\rm ess\,inf}_\Omega\rho_i(w)>0$.
Then, by construction, $\rho'(w)\in Y_{\eta^k}$.
\end{proof}


\subsection{Uniform estimates}

Let $u^0\in\mathcal{H}$ and $\rho^0=(\rho^0_1,\ldots,\rho^0_{N+1})$ satisfing
$\rho^0_i\ge 0$ for $i=1,\ldots,N+1$ and $\sum_{i=1}^{N+1}\rho_i^0=1$. We regularize
$u^0$ by $u^0_\tau$ which is the weak solution to $u_\tau^0-\tau\Delta u_\tau^0=u^0$
in $\Omega$ and $u_\tau^0=0$ on $\pa\Omega$. Then it holds that
$$
  \|u_\tau^0\|_{L^2(\Omega)}^2 + 2\tau\|\na u_\tau^0\|_{L^2(\Omega)}^2
	\le \|u^0\|_{L^2(\Omega)}^2
$$
and (at least for a subsequence)
$u_\tau^0\rightharpoonup u^0$ weakly in $L^2(\Omega;\R^d)$.
Let $0<\eta^0\le 1/(2(N+1))$ and define
$$
  \rho_i^{\eta^0} = \frac{\rho_i^0 + 2\eta^0}{1+2\eta^0(N+1)}, \quad i=1,\ldots,N+1.
$$
Then $\rho_i^{\eta^0}\ge\eta^0$ for all $i=1,\ldots,N+1$ and
$\sum_{i=1}^{N+1}\rho_i^{\eta^0}=1$. Finally, let $w^0\in L^{\infty}(\Omega;\R^N)$
be defined by \eqref{3.w}. Applying Lemma \ref{lem.approx} iteratively,
we obtain a sequence of approximate solutions $(u^k,w^k)\in\mathcal{V}\times
\widetilde H^2(\Omega;\R^N)$ to \eqref{4.weak.u}-\eqref{4.weak.rho} such that
$\rho'(w^k)\in Y_{\eta^k}$, where $\eta^k\in(0,1)$.
For the following, we set $\rho^k=\rho'(w^k)$ for $k\ge 0$,
slightly abusing our notation.

\begin{lemma}\label{lem.est12}
For any $1\le k\le M$ and sufficiently small $\eta^0>0$, it holds that
\begin{align}
  \|u^k\|_{L^2(\Omega)}^2 + \sum_{j=1}^k\|u^j-u^{j-1}\|_{L^2(\Omega)}^2
	+ \tau\sum_{j=1}^k\|\na u^j\|_{L^2(\Omega)}^2
	\le \|u^0\|_{L^2(\Omega)}^2 + \|f\|_{L^2(0,T;\mathcal{V}')}^2, \label{4.est1} \\
	\int_\Omega h(\rho^k)dz + C_B\tau\sum_{j=1}^k\|\na\sqrt{x(\rho^j)}\|_{L^2(\Omega)}^2
	+ \eps\tau\sum_{j=1}^k\int_\Omega(|\Delta w^j|^2+|w^j|^2)dz
	\le \int_\Omega h(\rho^0)dz + 1, \label{4.est2}
\end{align}
where $\sqrt{x(\rho^j)}=(\sqrt{x_1(\rho^j)},\ldots,\sqrt{x_{N+1}(\rho^j)})$,
$x_i(\rho^j)=\rho_i^j/(cM_i)$ for $i=1,\ldots,N+1$,
$c=\sum_{k=1}^{N+1}\rho_k^j/M_k$, and $C_B>0$ is obtained from Lemma \ref{lem.sqrtx}.
\end{lemma}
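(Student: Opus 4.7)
The plan is to derive \eqref{4.est1} and \eqref{4.est2} separately by testing each of the two discrete equations with its natural test function and then telescoping in $k$.

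For \eqref{4.est1}, first I would take $v=u^k\in\mathcal{V}$ in \eqref{4.weak.u}. The discrete time derivative is handled by the elementary identity $2(a-b)\cdot a=|a|^2-|b|^2+|a-b|^2$, which produces exactly the telescoping structure and the square-difference terms $\|u^j-u^{j-1}\|_{L^2}^2$ appearing on the left-hand side. The convective term $\int_\Omega((u^{k-1}\cdot\na)u^k)\cdot u^k\,dz$ vanishes after one integration by parts, since $\diver u^{k-1}=0$ and $u^k|_{\pa\Omega}=0$. The viscosity term directly gives $\|\na u^k\|_{L^2}^2$. For the forcing I would use $\langle f^k,u^k\rangle\le\tfrac{1}{2}\|f^k\|_{\mathcal{V}'}^2+\tfrac{1}{2}\|\na u^k\|_{L^2}^2$ and absorb half of the gradient into the viscosity term. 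Summing from $j=1$ to $k$ and using Jensen's inequality in the form $\tau\sum_{j=1}^M\|f^j\|_{\mathcal{V}'}^2\le\|f\|_{L^2(0,T;\mathcal{V}')}^2$ together with $\|u_\tau^0\|_{L^2}\le\|u^0\|_{L^2}$ closes the bound.

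For \eqref{4.est2}, I would take $q=w^k\in\widetilde H^2(\Omega;\R^N)$ in \eqref{4.weak.w}. Lemma \ref{lem.ent} immediately kills the convective term $\int_\Omega((u^k\cdot\na)\rho'(w^k))\cdot w^k\,dz=0$. Lemma \ref{lem.sqrtx} converts the cross-diffusion term into the dissipation $C_B\|\na\sqrt{x(\rho^k)}\|_{L^2}^2$, and the regularization contributes $\eps(\|\Delta w^k\|_{L^2}^2+\|w^k\|_{L^2}^2)$ directly. The key observation is that $w^k=(\pa h/\pa\rho')(\rho^k)$ with $h$ strictly convex on the open simplex (Lemma \ref{lem.hessian}), so the standard convexity inequality
$$
h(\rho^k)-h(\rho^{k-1})\le w^k\cdot(\rho^k-\rho^{k-1})
$$
converts the discrete time derivative into a telescoping entropy difference. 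Multiplying by $\tau$ and summing from $j=1$ to $k$ yields
$$
\int_\Omega h(\rho^k)\,dz+C_B\tau\sum_{j=1}^k\|\na\sqrt{x(\rho^j)}\|_{L^2}^2+\eps\tau\sum_{j=1}^k\int_\Omega(|\Delta w^j|^2+|w^j|^2)\,dz\le\int_\Omega h(\rho^{\eta^0})\,dz.
$$
The last step is to absorb the gap between $\rho^{\eta^0}$ and the original datum $\rho^0$ in the additive $+1$: since $h$ extends continuously to the closed simplex $\{\rho\in[0,1]^{N+1}:\sum_i\rho_i=1\}$ (with $0\ln 0=0$) and is bounded there, while $\rho^{\eta^0}\to\rho^0$ uniformly as $\eta^0\to 0$, dominated convergence gives $\int_\Omega h(\rho^{\eta^0})\,dz\to\int_\Omega h(\rho^0)\,dz$, so for $\eta^0$ sufficiently small the difference is at most $1$.

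I expect the main technical point to be the convexity step: applying $h(a)-h(b)\le Dh(a)\cdot(a-b)$ requires $\rho^k,\rho^{k-1}$ to lie in the open simplex where $h$ is finite and smooth. This is guaranteed because $w^k\in H^2(\Omega)\hookrightarrow L^\infty(\Omega)$ in dimension $d\le 3$ (the $\eps$-regularization is precisely what enforces this), after which Corollary \ref{coro.w2rho} places $\rho^k$ strictly in the interior. Once this is in place, the rest is standard discrete telescoping bookkeeping.
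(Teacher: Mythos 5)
Your proposal is correct and follows essentially the same route as the paper: the discrete energy estimate \eqref{4.est1} is the standard argument (the paper simply cites \cite[Section III.4.3]{Tem84}, which you carry out explicitly), and \eqref{4.est2} is obtained exactly as in the paper by testing \eqref{4.weak.w} with $w^k$, using Lemma \ref{lem.ent}, Lemma \ref{lem.sqrtx}, the convexity inequality $h(\rho^k)-h(\rho^{k-1})\le w^k\cdot(\rho^k-\rho^{k-1})$ from Step 3 of Lemma \ref{lem.approx}, and dominated convergence in $\eta^0$ to absorb $\int_\Omega h(\rho^{\eta^0})dz-\int_\Omega h(\rho^0)dz$ into the additive $1$.
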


\begin{proof}
The proof of \eqref{4.est1} is standard and we refer to \cite[Section III.4.3]{Tem84}
for a proof. Lemma \ref{lem.sqrtx} and Step 3 of the proof of Lemma \ref{lem.approx}
imply after summation over $j=1,\ldots,k$ that
$$
  \int_\Omega h(\rho^k)dz + C_B\tau\sum_{j=1}^k\|\sqrt{x(\rho^j)}\|_{L^2(\Omega)}^2
	+ \eps\tau\sum_{j=1}^k\int_\Omega(|\Delta w^j|^2+|w^j|^2)dz
	\le \int_\Omega h(\rho^{\eta^0})dz.
$$
By dominated convergence,
$$
  \lim_{\eta^0\to 0}\int_\Omega h(\rho^{\eta^0})dz = \int_\Omega h(\rho^0)dz,
$$
and hence, for sufficiently small $\eta^0>0$,
$$
  \int_\Omega h(\rho^{\eta^0})dz \le \int_\Omega h(\rho^0)dz + 1.
$$
This proves \eqref{4.est2}.
\end{proof}

\begin{lemma}\label{lem.est34}
It holds that
\begin{align}
  \tau\sum_{k=1}^M\left\|\frac{u^k-u^{k-1}}{\tau}\right\|_{\mathcal{V}_2'}^2
	&\le C(u^0,f), \label{4.est3} \\
	\tau\sum_{k=1}^M\|\na x(\rho^k)\|_{L^2(\Omega)}^2
	+ \tau\sum_{k=1}^M\|\na \rho^k\|_{L^2(\Omega)}^2
	+ \tau\sum_{k=1}^M\left\|\frac{\rho^k-\rho^{k-1}}{\tau}
	\right\|_{\widetilde H^2(\Omega)'}^2
	&\le C(u^0,\rho^0,f), \label{4.est4}
\end{align}
where ${\mathcal V}_2'$ is the dual space of ${\mathcal V}_2$,
defined in \eqref{1.space}.
\end{lemma}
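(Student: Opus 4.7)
The plan is to treat the four estimates in order of increasing complexity, using the energy and entropy bounds of Lemma \ref{lem.est12} together with the structural properties established in Section \ref{sec.prep}.

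First, for the time-derivative bound on $u^k$ in $\mathcal{V}_2'$, I would test \eqref{4.weak.u} against an arbitrary $v\in\mathcal{V}_2$ and estimate each of the three terms on the right-hand side. The viscous and forcing terms are bounded by $\|\nabla u^k\|_{L^2}\|v\|_{H^2}$ and $\|f^k\|_{\mathcal{V}'}\|v\|_{H^2}$. The convective term is the delicate one: since $d\le 3$, the Sobolev embedding $H^2(\Omega)\hookrightarrow L^\infty(\Omega)$ gives $|\int_\Omega((u^{k-1}\cdot\nabla)u^k)\cdot v\,dz|\le C\|u^{k-1}\|_{L^2}\|\nabla u^k\|_{L^2}\|v\|_{H^2}$. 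Squaring, multiplying by $\tau$, and summing over $k$ yields a bound of the form $\sup_j\|u^j\|_{L^2}^2\cdot\tau\sum_k\|\nabla u^k\|_{L^2}^2+\tau\sum_k\|\nabla u^k\|_{L^2}^2+\|f\|_{L^2(0,T;\mathcal{V}')}^2$, all of which are controlled by \eqref{4.est1}.

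Second, for the gradient bounds, the identity $\nabla x_i=2\sqrt{x_i}\,\nabla\sqrt{x_i}$ together with $0\le x_i\le 1$ gives $|\nabla x_i|^2\le 4|\nabla\sqrt{x_i}|^2$, so \eqref{4.est2} directly controls $\tau\sum_k\|\nabla x(\rho^k)\|_{L^2}^2$. Since by Lemma \ref{lem.G}(iii) the Jacobian $d\rho'/dx'$ has uniformly bounded entries, the chain rule yields $|\nabla\rho^k|\le C|\nabla x(\rho^k)|$ pointwise, so the same bound propagates to $\tau\sum_k\|\nabla\rho^k\|_{L^2}^2$.

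Third, for the time-derivative bound on $\rho^k$, I would test \eqref{4.weak.rho} against arbitrary $q\in\widetilde H^2(\Omega;\R^N)$. The diffusion term is handled through Lemma \ref{lem.A0}, which gives uniform boundedness of $A_0^{-1}(\rho^k)$, so it is bounded by $C\|\nabla x(\rho^k)\|_{L^2}\|\nabla q\|_{L^2}$. For the convective term I would integrate by parts using $\diver u^k=0$ and $u^k|_{\partial\Omega}=0$, obtaining $-\int_\Omega((u^k\cdot\nabla)q)\cdot\rho'(w^k)\,dz$, which is bounded by $\|u^k\|_{L^2}\|\nabla q\|_{L^2}\|\rho'(w^k)\|_{L^\infty}\le C\|u^k\|_{L^2}\|q\|_{H^2}$ thanks to Corollary \ref{coro.w2rho}. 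The regularization terms contribute $\eps(\|\Delta w^k\|_{L^2}+\|w^k\|_{L^2})\|q\|_{H^2}$. Squaring, multiplying by $\tau$, and summing gives contributions that are controlled by $\tau\sum_k\|\nabla x(\rho^k)\|_{L^2}^2$ from step two, $T\sup_k\|u^k\|_{L^2}^2$ from \eqref{4.est1}, and $\eps\cdot[\eps\tau\sum_k(\|\Delta w^k\|_{L^2}^2+\|w^k\|_{L^2}^2)]$ from \eqref{4.est2}. All three are uniformly bounded in $\tau$ and $\eps$.

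The only genuine obstacle is the convective term in the Navier-Stokes equation, which forces the weaker space $\mathcal{V}_2$ (rather than $\mathcal{V}$) in order to exploit the $L^\infty$ embedding; everywhere else the estimates are essentially linear combinations of what Lemma \ref{lem.est12} already provides, with Lemmas \ref{lem.A0} and \ref{lem.G} supplying the needed uniform pointwise bounds on the coefficient matrices.
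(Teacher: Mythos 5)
Your proposal is correct and follows essentially the same route as the paper: estimate \eqref{4.est3} is the standard dual estimate (which the paper simply cites from \cite{Tem84}), the gradient bounds follow from \eqref{4.est2} together with Lemma \ref{lem.G}~(iii) exactly as in the paper, and the $\widetilde H^2(\Omega)'$ bound is obtained by testing \eqref{4.weak.rho} with $q\in\widetilde H^2(\Omega;\R^N)$ and invoking Lemma \ref{lem.A0}, \eqref{4.est1}, and \eqref{4.est2}, with the $\eps$-terms absorbed exactly as you indicate. The only cosmetic difference is that you integrate the convective term by parts and use $\|\rho'(w^k)\|_{L^\infty(\Omega)}\le 1$, whereas the paper bounds $\int_\Omega((u^k\cdot\na)\rho^k)\cdot q\,dz$ directly by $\|u^k\|_{L^2(\Omega)}\|\na\rho^k\|_{L^2(\Omega)}\|q\|_{L^\infty(\Omega)}$; both are equally valid.
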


\begin{proof}
Again, estimate \eqref{4.est3} is standard; see \cite[Section III.4.3]{Tem84}.
Since $x_i(\rho^k)=\rho^k_i/(cM_i)$ with $c=\sum_{k=1}^{N+1}\rho_k/M_k$
is bounded by one, we find that
$$
  \|\na x(\rho^k)\|_{L^2(\Omega)}
	\le 2\|\sqrt{x(\rho^k)}\|_{L^2(\Omega)}\|\na\sqrt{x(\rho^k)}\|_{L^2(\Omega)}
	\le 2\|\na\sqrt{x(\rho^k)}\|_{L^2(\Omega)}.
$$
Thus, by \eqref{4.est2},
$$
  \tau\sum_{k=1}^M\|\na x(\rho^k)\|_{L^2(\Omega)}^2 \le C(\rho^0).
$$
Then it follows from Lemma \ref{lem.G} (iii) that 
$|\nabla {\rho}^k|\le C|\nabla {x}'(\rho^k)|$. Hence,
$$
  \tau\sum_{k=1}^M\|\na\rho^k\|_{L^2(\Omega)}^2
	\le C\tau\sum_{k=1}^M\|\na x'(\rho^k)\|_{L^2(\Omega)}^2 \le C(\rho^0).
$$
We deduce from \eqref{4.weak.rho}, the boundedness of the elements of $A_0^{-1}(\rho^k)$
(see Lemma \ref{lem.A0}), and the uniform estimate for $u_k$ in $L^2$
(see \eqref{4.est1}) that for $q\in \widetilde H^2(\Omega;\R^N)$,
\begin{align*}
  &\left|\frac{1}{\tau}\int_\Omega(\rho^k-\rho^{k-1})\cdot q dz\right|\\
	&\le \|A_0^{-1}(\rho^k)\|_{L^\infty(\Omega)}\|\na x'(\rho^k)\|_{L^2(\Omega)}
	\|q\|_{L^2(\Omega)} \\
	&\phantom{xx}{}+ \|u^k\|_{L^2(\Omega)}\|\na \rho^k\|_{L^2(\Omega)}
	\|q\|_{L^\infty(\Omega)}
	+ \eps(\|\Delta w^k\|_{L^2(\Omega)}+\|w^k\|_{L^2(\Omega)})\|q\|_{L^2(\Omega)} \\
	&\le C(u^0,f)\big(\|\na x'(\rho^k)\|_{L^2(\Omega)}
	+ \|\na \rho^k\|_{L^2(\Omega)} + \eps\|w^k\|_{H^2(\Omega)}\big)\|q\|_{H^2(\Omega)}.
\end{align*}
Taking into account the above uniform estimates for $\na x'(\rho^k)$ and
$\na\rho^k$ in $L^2$ and the estimate \eqref{4.est2} for $\sqrt{\eps}w^k$
in $H^{2}$, it follows that
\begin{align*}
  \tau\sum_{k=1}^M\left\|\frac{\rho^k-\rho^{k-1}}{\tau}
	\right\|_{\widetilde H^2(\Omega)'}^2
	&\le C(u^0,f)\tau\sum_{k=1}^M\left(\|\na x'(\rho^k)\|_{L^2(\Omega)}^2
	+ \|\na\rho^k\|_{L^2(\Omega)}^2 + \eps^2\|w^k\|^2_{H^2(\Omega)}\right) \\
	&\le C(u^0,\rho^0,f).
\end{align*}
This ends the proof.
\end{proof}


\subsection{Proof of Theorem \ref{thm.ex}}

Define the piecewise constant function
$u^{(\tau)}(x,t) = u^k(x)$, its time shift $(\pi_\tau u^{(\tau)})(x,t)=u^{k-1}(x)$
and the difference quotient
$$
  \pa_t^\tau u^{(\tau)}(x,t) = \frac{u^k(x)-u^{k-1}(x)}{\tau}
$$
for $x\in\Omega$, $(k-1)\tau<t\le k\tau$, $k=1,\ldots,M$.
Similarly, we define $f^{(\tau)}$, $w^{(\tau)}$, $\rho^{(\tau)}$, and
$\pa_t^\tau\rho^{(\tau)}$. Lemmas \ref{lem.est12} and \ref{lem.est34} imply
immediately the following uniform estimates:
\begin{align}
  \|u^{(\tau)}\|_{L^\infty(0,T;L^2(\Omega))}
	+ \|u^{(\tau)}\|_{L^2(0,T;H^1(\Omega))}
	+ \|\pa_t^\tau u^{(\tau)}\|_{L^2(0,T;\mathcal{V}_2')}
	&\le C, \label{4.est.u1} \\
	\tau^{-1}\|\pi_\tau u^{(\tau)}-u^{(\tau)}\|_{L^2(0,T;L^2(\Omega))}
	&\le C, \label{4.est.u2} \\
	\|x'(\rho^{(\tau)})\|_{L^\infty(0,T;L^\infty(\Omega))}
	+ \|x'(\rho^{(\tau)})\|_{L^2(0,T;H^1(\Omega))}
	&\le C, \label{4.est.x} \\
	\|\rho^{(\tau)}\|_{L^\infty(0,T;L^\infty(\Omega))}
	+ \|\rho^{(\tau)}\|_{L^2(0,T;H^1(\Omega))}
	+ \|\pa_t^\tau\rho^{(\tau)}\|_{L^2(0,T;\widetilde H^2(\Omega)')}
	&\le C, \label{4.est.rho} \\
	\sqrt{\eps}\|w^{(\tau)}\|_{L^2(0,T;H^{2}(\Omega))} &\le C. \label{4.est.w}
\end{align}

The weak formulation \eqref{4.weak.u}-\eqref{4.weak.rho} can be written
for any $v\in C_0^\infty(\Omega\times[0,T);\R^d)$ with $\diver v=0$ and any
$q\in C_{0}^\infty(\overline{\Omega}\times[0,T);\R^N)$ with
$\na q\cdot\nu|_{\pa\Omega}=0$ as follows:
\begin{align}
  \int_0^T\int_\Omega &\pa_t^\tau u^{(\tau)}\cdot v dz\,dt
	+ \int_0^T\int_\Omega\na u^{(\tau)}:\na v dz\,dt
	+ \int_0^T\int_\Omega((\pi_\tau u^{(\tau)}\cdot\na)u^{(\tau)})\cdot vdz\,dt
	\nonumber \\
	&{}= \int_0^T\int_\Omega f^{(\tau)}\cdot v dz\,dt, \label{4.eq1} \\
	\int_0^T\int_\Omega &\pa_t^\tau\rho^{(\tau)}\cdot q dz\,dt
	+ \int_0^T\int_\Omega \na q:A_0^{-1}(\rho^{(\tau)})\na x'(\rho^{(\tau)}) dz\,dt
	+ \int_0^T\int_\Omega((u^{(\tau)}\cdot\na)\rho^{(\tau)})\cdot q dz\,dt \nonumber \\
	&{}= -\eps\int_0^T\int_\Omega(\Delta w^{(\tau)}\cdot\Delta q + w^{(\tau)}
	\cdot q)dz\,dt. \label{4.eq2}
\end{align}
Estimates \eqref{4.est.u1} for $(u^{(\tau)})$
and \eqref{4.est.rho} for $(\rho^{(\tau)})$ allow us to apply Aubin's lemma
in the version of \cite{DrJu12} which yields the existence of subsequences
of $(u^{(\tau)})$ and $(\rho^{(\tau)})$ (not relabeled) such that, as
$(\eps,\tau)\to 0$,
$$
  u^{(\tau)}\to u, \quad \rho^{(\tau)}\to\rho'\quad\mbox{strongly in }
	L^2(0,T;L^2(\Omega)).
$$
Consequently, by \eqref{4.est.u2},
$$
  \|\pi_\tau u^{(\tau)}-u\|_{L^2(0,T;L^2(\Omega))}
	\le \|\pi_\tau u^{(\tau)}-u^{(\tau)}\|_{L^2(0,T;L^2(\Omega))}
	+ \|u^{(\tau)}-u\|_{L^2(0,T;L^2(\Omega))}\to 0
$$
and $(\pi_\tau u^{(\tau)}\cdot\na)u^{(\tau)}\rightharpoonup (u\cdot\na)u$
weakly in $L^1(0,T;L^1(\Omega))$.
Furthermore, the strong convergence of $(\rho^{(\tau)})$ and the boundedness
of the elements of $A_0^{-1}$ and $x'$ yield
$A_0^{-1}(\rho^{(\tau)})\to A_0^{-1}(\rho')$, $x'(\rho^{(\tau)})\to x'(\rho')$
strongly in $L^p(0,T;L^p(\Omega))$ for any $p<\infty$.
Together with the weak convergence (again up to a subsequence) of
$(\na x'(\rho^{(\tau)}))$, we infer that
$$
  \na x'(\rho^{(\tau)})\rightharpoonup \na x'(\rho)\quad
	\mbox{weakly in }L^2(0,T;L^2(\Omega)).
$$
Finally, we note that $f^{(\tau)}\to f$ strongly in $L^2(0,T;\mathcal{V}')$
(see \cite[Lemma III.4.9]{Tem84}) and $\eps w^{(\tau)}\to 0$ strongly
in $L^2(0,T;H^{2}(\Omega))$ as $(\eps,\tau)\to 0$. These convergences are sufficient
to pass to the limit $(\eps,\tau)\to 0$ in \eqref{4.eq1}-\eqref{4.eq2}
yielding a global solution $(u,\rho')$ to \eqref{4.weak1}-\eqref{4.weak2}.
In view of the a priori estimates uniform in $\eta^0$ and the finiteness of
the initial entropy, we can perform the limit $\eta^0\to 0$ and hence
conclude the existence result for general initial data. The theorem is proved.


\section{Proof of Theorem \ref{thm.long}}\label{sec.long}

Let $(u^k,w^k)$ be a solution to \eqref{4.weak.u} and \eqref{4.weak.w}.
First, we prove $L^1$ bounds for $\rho_i^k=\rho_i(w^k)$
and $c^k=\sum_{i=1}^{N+1}\rho_i^k/M_k$.

\begin{lemma}[Uniform $L^1$ norms for $\rho^k$]\label{lem.L1rho}
There exist constants $\gamma_0>0$, depending on $\rho^0$, and $\eps_0>0$ such that
for all $0<\gamma<\min\{1,\gamma_0\}$ and $0<\eps<\eps_0$,
\begin{align}
  \big|\|\rho_i^k\|_{L^1(\Omega)}-\|\rho_i^0\|_{L^1(\Omega)}\big|
	&\le \gamma\|\rho_i^0\|_{L^1(\Omega)}, \quad i=1,\ldots,N, \label{5.L11} \\
	\big|\|\rho_{N+1}^k\|_{L^1(\Omega)}-\|\rho_{N+1}^0\|_{L^1(\Omega)}\big|
	&\le \gamma\sum_{I=1}^N\|\rho_i^0\|_{L^1(\Omega)}. \label{5.L12}
\end{align}
Furthermore, $\|\rho_{N+1}^k\|_{L^1(\Omega)}
\ge \frac12\|\rho_{N+1}^0\|_{L^1(\Omega)} > 0$.
\end{lemma}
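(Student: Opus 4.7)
The plan is to test the discrete weak formulation~\eqref{4.weak.w} against constant vector fields. Since $\na q\cdot\nu=0$ is automatic for constant $q$, each standard basis vector $e_i\in\R^N$ is an admissible test function in $\widetilde H^2(\Omega;\R^N)$. Taking $q=e_i$ for $i\in\{1,\ldots,N\}$ kills three of the four terms in~\eqref{4.weak.w}: the Maxwell-Stefan flux term and the biharmonic regularization term vanish because $\na e_i\equiv 0$, and the convective term vanishes because
$$
\int_\Omega(u^k\cdot\na)\rho_i^k\,dz = -\int_\Omega(\diver u^k)\rho_i^k\,dz = 0,
$$
using $\diver u^k=0$ together with $u^k|_{\pa\Omega}=0$. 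What remains is the identity
$$
\int_\Omega\rho_i^k\,dz-\int_\Omega\rho_i^{k-1}\,dz = -\tau\eps\int_\Omega w_i^k\,dz,
$$
which, after telescoping, gives $\|\rho_i^k\|_{L^1(\Omega)}-\|\rho_i^0\|_{L^1(\Omega)}=-\eps\tau\sum_{j=1}^k\int_\Omega w_i^j\,dz$. Thus the only obstruction to exact $L^1$ conservation in the scheme is the zeroth-order regularization, and it remains to show it shrinks with $\eps$.

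To quantify this I would apply Cauchy-Schwarz twice --- first in space, then over the time index --- and invoke the entropy-dissipation bound~\eqref{4.est2}, which yields $\eps\tau\sum_{j=1}^k\|w_i^j\|_{L^2(\Omega)}^2\le\int_\Omega h(\rho^0)\,dz+1$. Using $k\tau\le T$, the calculation produces
$$
\bigg|\eps\tau\sum_{j=1}^k\int_\Omega w_i^j\,dz\bigg|
\le |\Omega|^{1/2}(\eps T)^{1/2}\Big(\textstyle\int_\Omega h(\rho^0)\,dz+1\Big)^{1/2}
= C(\rho^0,\Omega,T)\,\eps^{1/2}.
$$
Given any $\gamma\in(0,1)$, I then select $\eps_0=\eps_0(\gamma)$ so that $C\eps_0^{1/2}\le\gamma\min_{1\le i\le N}\|\rho_i^0\|_{L^1(\Omega)}$, and~\eqref{5.L11} follows for every $\eps<\eps_0$ and every $i\in\{1,\ldots,N\}$.

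For the last component I would use the pointwise constraint $\sum_{i=1}^{N+1}\rho_i^k\equiv 1$ built into the map $w\mapsto\rho'(w)$ (Corollary~\ref{coro.w2rho}); integration in space gives $\sum_{i=1}^{N+1}\|\rho_i^k\|_{L^1(\Omega)}=|\Omega|$ at each $k$. Subtracting the analogous identity at $k=0$ yields
$$
\|\rho_{N+1}^k\|_{L^1(\Omega)}-\|\rho_{N+1}^0\|_{L^1(\Omega)}
=-\sum_{i=1}^N\bigl(\|\rho_i^k\|_{L^1(\Omega)}-\|\rho_i^0\|_{L^1(\Omega)}\bigr),
$$
so~\eqref{5.L12} is immediate from~\eqref{5.L11}. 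For the uniform lower bound, the hypothesis $\rho_{N+1}^0\ge\eta>0$ of Theorem~\ref{thm.long} guarantees $\|\rho_{N+1}^0\|_{L^1(\Omega)}>0$, and the choice $\gamma_0:=\|\rho_{N+1}^0\|_{L^1(\Omega)}/\bigl(2\sum_{i=1}^N\|\rho_i^0\|_{L^1(\Omega)}\bigr)$ forces $\|\rho_{N+1}^k\|_{L^1(\Omega)}\ge\tfrac12\|\rho_{N+1}^0\|_{L^1(\Omega)}$ for every $\gamma<\gamma_0$.

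No step is conceptually deep; the only delicate point is the $\eps$-bookkeeping in the second paragraph. Because~\eqref{4.est2} controls $w^j$ in $L^2$ only with weight $\eps^{1/2}$, one application of Cauchy-Schwarz returns a bound of order $\eps^{1/2}$ rather than the full $O(\eps)$ one might naively hope for from the explicit $\eps$ prefactor. This is enough for the conclusion, but it is precisely why the threshold $\eps_0$ must be chosen \emph{after} $\gamma$.
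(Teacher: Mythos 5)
Your argument follows the paper's proof essentially line by line: test \eqref{4.weak.w} with $q=e_i$, telescope, bound the $\eps$-perturbation via Cauchy--Schwarz and the discrete entropy inequality, choose $\eps$ after $\gamma$, and handle $\rho_{N+1}$ through $\sum_{i=1}^{N+1}\rho_i^k=1$ with the same $\gamma_0$.

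The one slip is in the quantitative step: \eqref{4.est2} does \emph{not} directly yield $\eps\tau\sum_{j=1}^k\|w_i^j\|_{L^2(\Omega)}^2\le\int_\Omega h(\rho^0)\,dz+1$, because the term $\int_\Omega h(\rho^k)\,dz$ that you drop from the left-hand side is not nonnegative. Indeed, since $\sum_{i=1}^{N+1}x_i=1$, the entropy density reduces to $h(\rho')=c\sum_{i=1}^{N+1}x_i\ln x_i\le 0$, so discarding it worsens, not improves, the inequality (and the right-hand side $\int_\Omega h(\rho^0)\,dz+1$ could even be negative, making your claimed bound false as stated). The fix is exactly what the paper does: use the uniform lower bound $\int_\Omega h(\rho^k)\,dz\ge -C_1$ with $C_1$ depending only on $N$, $\mathrm{meas}(\Omega)$ and $\min_i M_i$, which gives $\eps\tau\sum_{j=1}^k\|w_i^j\|_{L^2(\Omega)}^2\le \int_\Omega h(\rho^0)\,dz+1+C_1$. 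This only enlarges your constant $C(\rho^0,\Omega,T)$ in the $O(\eps^{1/2})$ estimate, so the choice of $\eps_0(\gamma)$ as in \eqref{5.eps} and the rest of your argument go through unchanged.
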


\begin{proof}
The proof is similar to the proof of Lemma 4.1 in \cite{JuSt13}.
The main difference is that the entropy differs from that of \cite{JuSt13}
which makes some changes necessary. We recall that
$\tau=T/M$ with $T>0$ and $M\in\N$. Using the test function $q=e_i$
in \eqref{4.weak.w}, where $e_i$ is the $i$th unit vector of $\R^N$, and
observing that
$$
  \int_\Omega ((u^k\cdot\na)\rho'(w^k))\cdot e_i dz
	= -\int_\Omega\diver(u^{k})\rho_i(w^k)dz = 0,
$$
we have
$$
  \int_\Omega\rho_i^k dz = \int_\Omega\rho_i^{k-1}
	- \eps\tau\int_\Omega w_i^k dz, \quad i=1,\ldots,N.
$$
Solving this recursion, we deduce that
\begin{equation}\label{5.aux0}
  \int_\Omega\rho_i^k dz = \int_\Omega\rho_i^0 dz
	- \eps\tau\sum_{j=1}^k\int_\Omega w_i^j dz, \quad i=1,\ldots,N.
\end{equation}
Thus, we need to bound the $L^1$ norm of $w_i^j$.
Recalling that $H(\rho^k)=\int_\Omega h(\rho'(w^k))dz$, we infer
from Step 3 of the proof of Lemma \ref{lem.approx} that
$$
  H(\rho^k) + \eps\tau\int_\Omega |w_i^k|^2 dz \le H(\rho^{k-1})
$$
or, solving the recursion,
\begin{equation}\label{5.aux1}
  H(\rho^k) + \eps\tau\sum_{j=1}^k\int_\Omega |w_i^j|^2 dz \le H(\rho^0).
\end{equation}
It follows from the definition of the entropy and estimate \eqref{3.cmax} that
the entropy can be bounded from below:
\begin{align*}
  H(\rho^k)
	&= \int_\Omega c^k\sum_{j=1}^{N+1}\big(x_i^k(\ln x_i^k-1)+1\big)
	- N\int_\Omega c^k dz \ge -C_1:=-N\mbox{meas}(\Omega)M_*^{-1},
\end{align*}
where $c^k=\sum_{i=1}^{N+1}\rho^k_i/M_i$, $x_i^k=\rho_i^k/(c^kM_i)$,
and $M_*=\min_{1\le i\le N+1}M_i$. Therefore, \eqref{5.aux1} implies that
$$
  \eps\tau\sum_{j=1}^k\int_\Omega |w_i^j|^2 dz
	\le H(\rho^0)-H(\rho^k) \le H(\rho^0)+C_1.
$$
The $L^1$ norm of $w_i^k$ can be estimated by its $L^2$ norm
by applying the Cauchy-Schwarz inequality:
\begin{align*}
  \eps\tau\sum_{j=1}^k\int_\Omega|w_i^j|dz
	&\le \eps\tau\sqrt{\mbox{meas}(\Omega)}\sum_{j=1}^k\|w_i^j\|_{L^2(\Omega)}
	\le \eps\tau\sqrt{k\mbox{meas}(\Omega)}\left(\sum_{j=1}^k\|w_i^j\|_{L^2(\Omega)}^2
	\right)^{1/2} \\
	&= \sqrt{\eps\tau k\mbox{meas}(\Omega)}\left(\eps\tau\sum_{j=1}^k
	\|w_i^j\|_{L^2(\Omega)}^2\right)^{1/2}
	\le \sqrt{\eps T\mbox{meas}(\Omega)(H(\rho^0)+C_1)},
\end{align*}
where we used $\tau k\le T$. We conclude from \eqref{5.aux0} that
$$
  \big|\|\rho_i^k\|_{L^1(\Omega)}-\|\rho_i^0\|_{L^1(\Omega)}\big|
	\le \sqrt{\eps T\mbox{meas}(\Omega)(H(\rho^0)+C_1)}.
$$
Given $0<\gamma<1$, let $\eps>0$ satisfy
\begin{equation}\label{5.eps}
  \sqrt{\eps} \le
	\frac{\gamma\min_{1\le j\le N}\|\rho_j^0\|_{L^1(\Omega)}}{
	\sqrt{T\mbox{meas}(\Omega)(H(\rho^0)+C_1)}}.
\end{equation}
This proves \eqref{5.L11}.

For $i=N+1$, we estimate
\begin{align*}
  \big|\|\rho_{N+1}^k\|_{L^1(\Omega)}-\|\rho_{N+1}^0\|_{L^1(\Omega)}\big|
	&= \left|\int_\Omega\left(1-\sum_{i=1}^N\rho_i^k\right)dz
	- \int_\Omega\left(1-\sum_{i=1}^N\rho_i^0\right)dz\right| \\
	&\le \sum_{i=1}^N\big|\|\rho_i^k\|_{L^1(\Omega)}-\|\rho_i^0\|_{L^1(\Omega)}\big|
	\le \gamma\sum_{i=1}^N\|\rho_i^0\|_{L^1(\Omega)}
\end{align*}
which proves \eqref{5.L12}. From this estimate follows that
$$
  \|\rho_{N+1}^k\|_{L^1(\Omega)}
	\ge \|\rho_{N+1}^0\|_{L^1(\Omega)} - \gamma\sum_{i=1}^N\|\rho_i^0\|_{L^1(\Omega)}.
$$
Hence, defining
\begin{equation}\label{5.gamma0}
  \gamma_0 = \frac{\|\rho_{N+1}^0\|_{L^1(\Omega)}}{2
	\sum_{i=1}^N\|\rho_i^0\|_{L^1(\Omega)}}
\end{equation}
and choosing $0<\gamma<\min\{1,\gamma_0\}$,
we deduce that $\|\rho_{N+1}^k\|_{L^1(\Omega)}\ge
\frac12\|\rho_{N+1}^0\|_{L^1(\Omega)}$.
\end{proof}

\begin{lemma}[Uniform $L^1$ norms for $c^k$]\label{lem.L1w}
With $\gamma$ as in Lemma \ref{lem.L1rho}, it holds that
$$
  \big|\|c^k\|_{L^1(\Omega)}-\|c^0\|_{L^1(\Omega)}\big|
	\le M_0\gamma\|c^0\|_{L^1(\Omega)},
$$
where $M_0=\max_{1\le i\le N}|1-M_i/M_{N+1}|$.
\end{lemma}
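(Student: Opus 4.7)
The proof should be a direct algebraic consequence of the definition $c^k=\sum_{i=1}^{N+1}\rho_i^k/M_i$ together with the mass constraint $\sum_{i=1}^{N+1}\rho_i^k=1$ a.e.\ in $\Omega$ (which gives $\rho_{N+1}^k=1-\sum_{i=1}^N\rho_i^k$) and the estimates already established in Lemma \ref{lem.L1rho}. Since $\rho_i^k\ge 0$, $L^1$ norms reduce to integrals and the difference of norms commutes with the linear combination. Write $\Delta_i:=\|\rho_i^k\|_{L^1(\Omega)}-\|\rho_i^0\|_{L^1(\Omega)}$ for $i=1,\ldots,N+1$.

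The first step is to observe that integrating the identity $\sum_{i=1}^{N+1}\rho_i^k=1=\sum_{i=1}^{N+1}\rho_i^0$ over $\Omega$ gives $\sum_{i=1}^{N+1}\Delta_i=0$, so $\Delta_{N+1}=-\sum_{i=1}^N\Delta_i$. The second step is to expand
\[
  \|c^k\|_{L^1(\Omega)}-\|c^0\|_{L^1(\Omega)}
	=\sum_{i=1}^{N+1}\frac{\Delta_i}{M_i}
	=\sum_{i=1}^N\Delta_i\left(\frac{1}{M_i}-\frac{1}{M_{N+1}}\right),
\]
where the substitution for $\Delta_{N+1}$ has been used to eliminate the $(N+1)$-th component, which is the one not controlled directly by \eqref{5.L11}.

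The third step is to take absolute values, use $|1/M_i-1/M_{N+1}|=M_i^{-1}|1-M_i/M_{N+1}|\le M_0/M_i$, and apply \eqref{5.L11} to each $|\Delta_i|$ for $i=1,\ldots,N$:
\[
  \big|\|c^k\|_{L^1(\Omega)}-\|c^0\|_{L^1(\Omega)}\big|
	\le M_0\sum_{i=1}^N\frac{|\Delta_i|}{M_i}
	\le M_0\gamma\sum_{i=1}^N\frac{\|\rho_i^0\|_{L^1(\Omega)}}{M_i}.
\]
The final step is to enlarge the sum on the right by adding the nonnegative term $\|\rho_{N+1}^0\|_{L^1(\Omega)}/M_{N+1}$, which yields exactly $\|c^0\|_{L^1(\Omega)}$, giving the claimed inequality.

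There is no real obstacle here: the estimate is a short post-processing of Lemma \ref{lem.L1rho}. The only subtlety is to avoid using \eqref{5.L12} for $\Delta_{N+1}$ directly (which would be wasteful and would not produce the factor $M_0$); instead one exploits the identity $\sum\Delta_i=0$ to trade the uncontrolled $(N+1)$-th increment for the controlled first $N$, which is precisely what generates the differences $1/M_i-1/M_{N+1}$ and hence the prefactor $M_0$.
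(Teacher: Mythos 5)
Your proof is correct and follows essentially the same route as the paper: express the difference of $L^1$ norms as $\sum_{i=1}^{N+1}M_i^{-1}\int_\Omega(\rho_i^k-\rho_i^0)\,dz$, use $\sum_{i=1}^{N+1}\rho_i^k=1=\sum_{i=1}^{N+1}\rho_i^0$ to eliminate the $(N+1)$-th term and produce the factors $1/M_i-1/M_{N+1}$, then apply \eqref{5.L11} and enlarge the resulting sum to $\|c^0\|_{L^1(\Omega)}$. No gaps; your remark about not using \eqref{5.L12} matches the paper's argument exactly.
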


\begin{proof}
We employ the definitions $c^k=\sum_{i=1}^{N+1}\rho_i^k/M_i$ and
$\sum_{i=1}^{N+1}\rho_i^k=1$ and the estimate \eqref{5.L11} to obtain
\begin{align*}
  \big|\|c^k&\|_{L^1(\Omega)}-\|c^0\|_{L^1(\Omega)}\big|
	= \left|\sum_{i=1}^{N+1}\frac{1}{M_i}\int_\Omega(\rho_i^k-\rho_i^0)dz\right| \\
	&= \left|\sum_{i=1}^{N}\left(\frac{1}{M_i}-\frac{1}{M_{N+1}}\right)
	\int_\Omega(\rho_i^k-\rho_i^0)dz\right|
	\le M_0\sum_{i=1}^N\frac{1}{M_i}
	\big|\|\rho_i^k\|_{L^1(\Omega)}-\|\rho_i^0\|_{L^1(\Omega)}\big| \\
	&\le M_0\gamma\sum_{i=1}^N\frac{\|\rho_i^0\|_{L^1(\Omega)}}{M_i}
	\le M_0\gamma\|c^0\|_{L^1(\Omega)}.
\end{align*}
which finishes the proof.
\end{proof}

Now, we turn to the proof of Theorem \ref{thm.long} which is divided
into several steps.

{\em Step 1: Relative entropy dissipation inequality.}
Let $(u^k,w^k)\in\mathcal{V}\times \widetilde H^2(\Omega;\R^N)$ be a solution
to \eqref{4.weak.u} and \eqref{4.weak.w} which exists according to
Lemma \ref{lem.approx}. We introduce the following notation:
\begin{align*}
  & \rho^k = (\rho_1^k,\ldots,\rho_{N+1}^k) = (\rho_1(w^k),\ldots,\rho_{N+1}(w^k)),
	\quad w^k = (w_1^k,\ldots,w_N^k), \\
	& \bar\rho^k = (\bar\rho_1^k,\ldots,\bar\rho_{N+1}^k), \quad
	\bar x^k = (\bar x_1^k,\ldots,\bar x_{N+1}^k), \quad
	\bar w^k = (\bar w_1^k,\ldots,\bar w_{N}^k),
\end{align*}
where $\bar\rho_i^k = \mbox{meas}(\Omega)^{-1}\|\rho_i^k\|_{L^1(\Omega)}$,
$\bar c^k =\mbox{meas}(\Omega)^{-1}\|c^k\|_{L^1(\Omega)}$,
$\bar x_i^k=\bar\rho_i^k/(\bar c^k M_i)$ for $i=1,\ldots,N+1$, and
$\bar w_i^k=\ln(\bar x_i^k)/M_i-\ln(\bar x_{N+1}^k)/M_{N+1}$ for $i=1,\ldots,N$.
It holds that
$$
  \bar c^k = \sum_{i=1}^{N+1}\frac{\bar\rho_i^k}{M_i}, \quad
	\sum_{i=1}^{N+1}\bar\rho_i^k = \sum_{i=1}^{N+1}\bar x_i^k = 1.
$$
With the test function $w^k-\bar w^k$ in \eqref{4.weak.w} we obtain
\begin{align}
  \frac{1}{\tau}& \int_\Omega(\rho'(w^k)-\rho'(w^{k-1}))\cdot(w^k-\bar w^k)dz
	+ \int_\Omega \na w^k:B(w^k)\na w^k dz \nonumber \\
	&{}+ \int_\Omega((u^k\cdot\na)\rho'(w^k))\cdot(w^k-\bar w^k)dz
	+ \eps\int_\Omega(|\Delta w^k|^2+ w^k\cdot(w^k-\bar w^k))dz = 0. \label{5.weak}
\end{align}
If $k=1$, we write $(\rho_1,\ldots,\rho_N)$ instead of $\rho'(w^{k-1})$
in the first integral.
The second integral can be estimated according to Lemma \ref{lem.sqrtx} and the
third integral vanishes in view of Lemma \ref{lem.ent}. Furthermore, using
$w^k\cdot(w^k-\bar w^k) \ge \frac12(|w^k|^2-|\bar w^k|)^2$,
the fourth integral can be written as
$$
  \int_\Omega(|\Delta w^k|^2+ w^k\cdot(w^k-\bar w^k))dz
	\ge \frac12\int_\Omega(|w^k|^2-|\bar w^k|^2)dz \ge -\frac12\int_\Omega|\bar w^k|^2 dz.
$$

It remains to treat the first integral in \eqref{5.weak}.
For this, we employ the formulation \eqref{3.w}
of $w^k$ and $\rho_{N+1}^k=1-\sum_{i=1}^N\rho_i^k$:
\begin{align*}
  (\rho'(w^k)-\rho'(w^{k-1}))\cdot w^k
	&= \sum_{i=1}^N(\rho_i^k-\rho_i^{k-1})\left(\frac{\ln x_i^k}{M_i}
	-\frac{\ln x_{N+1}^k}{M_{N+1}}\right)
	= \sum_{i=1}^{N+1}(\rho_i^k-\rho_i^{k-1})\frac{\ln x_i^k}{M_i} \\
	&= \sum_{i=1}^{N+1}(c^k x_i^k-c^{k-1}x_i^{k-1})\ln x_i^k
	= (c^kx^k-c^{k-1}x^{k-1})\cdot\ln x^k.
\end{align*}
Similarly, $(\rho'(w^k)-\rho'(w^{k-1}))\cdot\bar w^k
=(c^kx^k-c^{k-1}x^{k-1})\cdot\ln\bar x^k$. Therefore, the first integral becomes
\begin{align*}
  \int_\Omega(&\rho'(w^k)-\rho'(w^{k-1}))\cdot(w^k-\bar w^k)dz
	= \int_\Omega(c^kx^k-c^{k-1}x^{k-1})\cdot\ln\frac{x^k}{\bar x^k}dz \\
	&=\int_\Omega(c^kx^k-c^{k-1}x^{k-1})\cdot\ln\frac{x^k}{\bar x^0}dz
	+ \int_\Omega (c^kx^k-c^{k-1}x^{k-1})\cdot\ln\frac{\bar x^0}{\bar x^k}dz
	= I_1+I_2.
\end{align*}
First, we estimate $I_1$ . To this end, we use the convexity of $h(\rho')$:
\begin{equation}\label{5.aux2}
  h(\rho'(w^k))-h(\rho'(w^{k-1})) \le w^k\cdot(\rho'(w^k)-\rho'(w^{k-1}))
	= (c^kx^k-c^{k-1}x^{k-1})\cdot\ln x^k.
\end{equation}
Then definitions \eqref{1.relent} of the relative entropy $H^*$ and
\eqref{1.h} of the entropy density $h(\rho')$ give
\begin{align*}
  H^*(\rho^k)-H^*(\rho^{k-1})
	&= \sum_{i=1}^{N+1}\int_\Omega(c^kx_i^k\ln x_i^k-c^{k-1}x_i^{k-1}\ln x_i^{k-1})dz \\
	&\phantom{xx}{}- \int_\Omega(c^kx^k-c^{k-1}x^{k-1})\cdot\ln \bar x^0 dz \\
	&= \int_\Omega\big(h(\rho'(w^k))-h(\rho'(w^{k-1}))\big)dz
	+ \sum_{i=1}^{N+1}\int_\Omega(c^kx_i^k-c^{k-1}x_i^{k-1})dz \\
	&\phantom{xx}{}- \int_\Omega(c^k-c^{k-1})dz
	- \int_\Omega(c^kx^k-c^{k-1}x^{k-1})\cdot\ln \bar x^0 dz.
\end{align*}
Since $\sum_{i=1}^{N+1}\int_\Omega c^k x_i^k dz = \int_\Omega c^k dz$,
the second and third integrals on the right-hand side cancel.
We employ \eqref{5.aux2} to find that
$$
  H^*(\rho^k)-H^*(\rho^{k-1}) \le \int_\Omega(c^kx^k-c^{k-1}x^{k-1})\cdot\ln x^k dz
	- \int_\Omega(c^kx^k-c^{k-1}x^{k-1})\cdot\ln \bar x_i^0 dz = I_1.
$$

Next, we estimate $I_2$. Let $0<\gamma<\min\{\frac12,\gamma_0,(2M_0)^{-1}\}$,
where $M_0$ is defined in Lemma \ref{lem.L1w}. We infer from
Lemmas \ref{lem.L1rho} and \ref{lem.L1w} and from the definition \eqref{5.gamma0}
of $\gamma_0$ the following bounds:
\begin{align}
  & \frac{1-M_0\gamma}{1+\gamma} \le \frac{\bar x_i^0}{\bar x_i^k}
	= \frac{\|\rho_i^0\|_{L^1(\Omega)}\|c^k\|_{L^1(\Omega)}}{\|\rho_i^k\|_{L^1(\Omega)}
	\|c^0\|_{L^1(\Omega)}}
	\le \frac{1+M_0\gamma}{1-\gamma}, \quad i=1,\ldots,N, \label{5.x1} \\
	& \frac{1-M_0\gamma}{1+\gamma/(2\gamma_0)} \le \frac{\bar x_{N+1}^0}{\bar x_{N+1}^k}
	= \frac{\|\rho_{N+1}^0\|_{L^1(\Omega)}\|c^k\|_{L^1(\Omega)}}{
	\|\rho_{N+1}^k\|_{L^1(\Omega)}\|c^0\|_{L^1(\Omega)}}
	\le \frac{1+M_0\gamma}{1-\gamma/(2\gamma_0)}. \label{5.x2}
\end{align}
Thus, taking into account $\sum_{i=1}^{N+1}x_i^k=1$, we obtain
\begin{align*}
  I_2 &\ge \sum_{i=1}^N\int_\Omega c^kx_i^k dz\ln\frac{1-M_0\gamma}{1+\gamma}
	+ \int_\Omega c^kx_{N+1}^k dz\ln\frac{1-M_0\gamma}{1+\gamma/(2\gamma_0)} \\
	&\phantom{xx}{}- \sum_{i=1}^N\int_\Omega c^{k-1}x_i^{k-1} dz\
	\ln\frac{1+M_0\gamma}{1-\gamma}
	- \int_\Omega c^{k-1}x_{N+1}^{k-1}dz\ln\frac{1+M_0\gamma}{1-\gamma/(2\gamma_0)} \\
	&\ge \int_\Omega c^k dz\ln\frac{1-M_0\gamma}{(1+\gamma)(1+\gamma/(2\gamma_0))}
	- \int_\Omega c^{k-1}dz\ln\frac{1+M_0\gamma}{(1-\gamma)(1-\gamma/(2\gamma_0))}.
\end{align*}
Because of $c^k\le (\min_{1\le i\le N+1}M_i)^{-1}=M_*^{-1}$ (see \eqref{3.cmax}),
we conclude that
\begin{equation}\label{5.C2}
  I_2 \ge -C_2(\gamma) := -\mbox{meas}(\Omega)M_*^{-1}\ln\frac{(1+M_0\gamma)(1+\gamma)
	(1+\gamma/(2\gamma_0))}{(1-M_0\gamma)(1-\gamma)	(1-\gamma/(2\gamma_0))}.
\end{equation}
Therefore, the first integral in \eqref{5.weak} is bounded as follows:
$$
  \int_\Omega(\rho'(w^k)-\rho'(w^{k-1}))\cdot(w^k-\bar w^k)dz
  \ge H^*(\rho^k)-H^*(\rho^{k-1}) - C_2(\gamma).
$$
Summarizing, \eqref{5.weak} can be estimated as
\begin{equation}\label{5.disc}
  H^*(\rho^k) - H^*(\rho^{k-1}) + C_B\tau\int_\Omega\|\na\sqrt{x^k}\|^2 dz
	\le \frac{\eps\tau}{2}\int_\Omega|\bar w^k|^2 dz + C_2(\gamma).
\end{equation}

{\em Step 2: Estimate of the relative entropy.}
We split the relative entropy into two integrals:
$$
  H^*(\rho^k) = \sum_{i=1}^{N+1}\int_\Omega c^k x_i^k\ln\frac{x_i^k}{\bar x_i^k}dz
	+ \sum_{i=1}^{N+1}\int_\Omega c^k x_i^k\ln\frac{\bar x_i^k}{\bar x_i^0}dz
	= J_1 + J_2.
$$
It follows from \eqref{5.x1} and \eqref{5.x2} that
\begin{align}
  J_2 &\le \sum_{i=1}^N\int_\Omega c^k x_i^k dz\ln\frac{1+\gamma}{1-M_0\gamma}
	+ \int_\Omega c^k x_{N+1}^k dz\ln\frac{1+\gamma/(2\gamma_0)}{1-M_0\gamma}
	\nonumber \\
	&\le C_3(\gamma):=\mbox{meas}(\Omega)M_*^{-1}
	\ln\frac{(1+\gamma)(1+\gamma/(2\gamma_0)}{1-M_0\gamma}. \label{5.C3}
\end{align}
The integral $J_1$ is also split into two parts:
$$
  J_1 = \sum_{i=1}^{N+1}\int_\Omega c^k x_i^k
	\ln\frac{c^kx_i^k\mbox{meas}(\Omega)}{\|c^k x_i^k\|_{L^1(\Omega)}} dz
	+ \sum_{i=1}^{N+1}\int_\Omega c^k x_i^k
	\ln\frac{\|c^kx_i^k\|_{L^1(\Omega)}}{c^k\bar x_i^k\mbox{meas}(\Omega)} dz
	= J_{11}+J_{12}.
$$
Inserting the definitions $x_i^k=\rho_i^k/(c^kM_i)$ and
$\bar x_i^k=\bar\rho_i^k/(\bar c^k M_i)$ and using Jensen's inequality
for the convex function $s\mapsto s\ln s$ ($s>0$), we obtain
$$
  J_{12} = \sum_{i=1}^{N+1}\int_\Omega c^kx_i^k\ln\frac{\bar c^k}{c^k} dz
	= \int_\Omega c^k\ln\frac{\bar c^k}{c^k}dz
	= \|c^k\|_{L^1(\Omega)}\ln\bar c^k - \|c^k\ln c^k\|_{L^1(\Omega)} \le 0.
$$
The estimate of $J_{11}$ is more involved. We employ the logarithmic Sobolev
inequality
$$
  \int_\Omega u^2\ln\frac{u^2}{\bar u^2}dz \le C_L\int_\Omega|\na u|^2 dz,
	\quad \bar u^2 = \frac{1}{\mbox{meas}(\Omega)}\int_\Omega u^2 dz,
$$
where $u\in H^1(\Omega)$, and $C_L>0$ depends only on $\Omega$ \cite{Jue12}. Then
$$
  J_{11} \le C_L\sum_{i=1}^{N+1}\int_\Omega|\na\sqrt{c^k x_i^k}|^2 dz.
$$
Since
$$
  \sum_{i=1}^{N+1}|\na\sqrt{c^k x_i^k}|^2
	\le 2\sum_{i=1}^{N+1}x_i^k|\na\sqrt{c^k}|^2 + 2\sum_{i=1}^{N+1}c^k|\na\sqrt{x_i^k}|^2
	= 2|\na\sqrt{c^k}|^2 + 2c^k\|\na\sqrt{x^k}\|^2,
$$
we obtain
$$
  J_{11} \le 2C_L\int_\Omega|\na\sqrt{c^k}|^2 dz
	+ 2C_LM_*^{-1}\int_\Omega\|\na\sqrt{x^k}\|^2 dz.
$$
We claim that the first integral can be estimated by a multiple of the second one.
Indeed, by the Cauchy-Schwarz inequality, the definition of $c^k$ according to
Lemma \ref{lem.x2rho}, and the bound \eqref{3.cmax}, it follows that
\begin{align*}
  |\na\sqrt{c^k}|^2
	&= \frac{1}{4c^k}\left|\frac{-\sum_{i=1}^{N+1}M_i\na x_i^k}{(\sum_{i=1}^{N+1}
	M_i x_i^k)^2}\right|^2
	= (c^k)^3\left|\sum_{i=1}^{N+1}M_i\sqrt{x_i^k}\na\sqrt{x_i^k}\right|^2 \\
	&\le (c^k)^3\sum_{i=1}^{N+1}M_i^2 x_i^k\sum_{i=1}^{N+1}|\na\sqrt{x_i^k}|^2
	\le M_*^{-3}M^{*2}\|\na\sqrt{x^k}\|^2,
\end{align*}
recalling that $M_*=\min_{1\le i\le N+1}M_i$ and setting $M^*=\max_{1\le i\le N+1}M_i$.
Thus, we can estimate $J_{11}$ as follows:
$$
  J_{11} \le 2C_LM_*^{-1}(M_*^{-2}M^{*2}+1)\int_\Omega\|\na\sqrt{x^k}\|^2 dz.
$$
Combining the above estimates, we conclude that
$$
  H^*(\rho^k) \le C_3(\gamma)
	+ 2C_LM_*^{-1}(M_*^{-2}M^{*2}+1)\int_\Omega\|\na\sqrt{x^k}\|^2 dz.
$$

{\em Step 3: End of the proof.} Replacing the entropy dissipation term
involving $\sqrt{x^k}$ in \eqref{5.disc} by the above estimate for $H^*(\rho^k)$,
we find that
\begin{equation}\label{5.aux3}
  (1+C_4\tau)H^*(\rho^k) \le H^*(\rho^{k-1})
	+ \frac{\eps\tau}{2}\int_\Omega |\bar w^k|^2 dz + C_\gamma,
\end{equation}
where $C_4=\frac12 C_BC_L^{-1}M_*(M_*^{-2}M^{*2}+1)^{-1}$ and
$C_\gamma=C_2(\gamma)+\frac12C_3(\gamma)C_L^{-1}M_*(M_*^{-2}M^{*2}+1)^{-1}$.
Note that according to definitions \eqref{5.C2}
and \eqref{5.C3}, we have $C_\gamma\to 0$ as $\gamma\to 0$.

We need to estimate the integral involving $w^k$. For this, we observe that
\eqref{5.x1}-\eqref{5.x2} and the upper bound for $\gamma$ imply that
$\frac13\le \bar x_i^0/\bar x_i^k\le 3$ for $i=1,\ldots,N+1$. This provides
some uniform bounds for $\bar x_i^k$,
$$
  0 < \frac{M_*\min_{1\le i\le N+1}\|\rho_i^0\|_{L^1(\Omega)}}{3M^*
	\sum_{i=1}^{N+1}\|\rho_i^0\|_{L^1(\Omega)}} \le \frac{\bar x_i^0}{3}
	\le \bar x_i^k \le 3\bar x_i^0 \le 3, \quad i=1,\ldots,N+1,
$$
which allow us to estimate $w^k$:
$$
  \int_\Omega|\bar w^k|^2 dz \le \sum_{i=1}^{N}\int_\Omega\left(
	\left|\frac{\ln\bar x_i^k}{M_i}\right|
	+ \left|\frac{\ln\bar x_{N+1}^k}{M_{N+1}}\right|\right)^2 dz \le C_5,
$$
where $C_5>0$ depends on $\Omega$, $\rho^0$, $M_*$, and $M^*$. Hence, \eqref{5.aux3}
becomes
$$
  H^*(\rho^k) \le (1+C_4\tau)^{-1}H^*(\rho^{k-1})
	+ \left(\frac{\eps\tau}{2}C_5 + C_\gamma\right)(1+C_4\tau)^{-1}.
$$
Solving this recursion, we infer that
$$
  H^*(\rho^k) \le (1+C_4\tau)^{-1}H^*(\rho^{0})
	+ \left(\frac{\eps\tau}{2}C_5 + C_\gamma\right)\sum_{i=1}^k(1+C_4\tau)^{-i}.
$$
Using $\sum_{i=1}^k(1+C_4\tau)^{-i}\le  1/(C_4\tau)$, it follows that
$$
  H^*(\rho^{(\tau)}(\cdot,t)) \le (1+C_4\tau)^{-t/\tau}H^*(\rho^0)
	+ \frac{\eps C_5}{2C_4} + \frac{C_\gamma}{C_4\tau}, \quad 0<t<T.
$$

Now, we take $\tau=\tau(\gamma)=\sqrt{C_\gamma}$ and $\eps=\eps(\gamma)$ according to
\eqref{5.eps}. In the limit $\gamma\to 0$, it follows that
$C_\gamma/\tau(\gamma)\to 0$, $\eps(\gamma)\to 0$, and $\tau(\gamma)\to 0$
so that $\rho_i^{(\tau)}\to \rho_i$
strongly in $L^2(0,T;L^2(\Omega))$ for $i=1,\ldots,N+1$. This gives
in the limit $\gamma\to 0$
\begin{equation}\label{5.decay}
  H^*(\rho(\cdot,t)) \le e^{-C_4 t}H^*(\rho^0), \quad t\ge 0,
\end{equation}
and, taking into account Lemmas \ref{lem.L1rho} and \ref{lem.L1w},
we conclude the $L^1$ conservation for $\rho_i$ and $c$:
$$
  \int_\Omega\rho_i dz = \int_\Omega\rho_i^0 dz, \quad
	\int_\Omega c dz = \int_\Omega c^0 dz,
$$
where $c^0=\sum_{j=1}^{N+1}\rho_j^0/M_j$ and $i=1,\ldots,N+1$.

It remains to estimate $x_i-\bar x_i^0$ in the $L^1$ norm. Defining
$$
  f_i = \frac{cx_i}{\int_\Omega c^0 x_i^0 dz}, \quad g_i = \frac{c}{\int_\Omega c^0 dz},
$$
the entropy $H^*(\rho)=\sum_{i=1}^{N+1}\int_\Omega cx_i\ln(x_i/\bar x_i^0)dz$ can be
written as
$$
  H^*(\rho) = \sum_{i=1}^{N+1}\int_\Omega c^0x_i^0 dz
	\int_\Omega f_i\ln\frac{f_i}{g_i}dz,
$$
where we employed the identity
$$
  \frac{f_i}{g_i} = \frac{x_i\int_\Omega c^0 dz}{\int_\Omega c^0 x_i^0 dz}
	= \frac{M_ix_i\int_\Omega c^0 dz}{\int_\Omega\rho_i^0 dz}
	= \frac{M_i x_i\bar c^0}{\bar\rho_i^0} = \frac{x_i}{\bar x_i^0}.
$$
Finally, using
$$
  c\bar x_i^0 = \frac{c\bar\rho_i^0}{\bar c^0 M_i}
	= \frac{c\int_\Omega\rho_i^0 dz}{\int_\Omega c^0 dzM_i}
	= \frac{c\int_\Omega c^0 x_i^0 dz}{\int_\Omega c^0 dz}
	= \int_\Omega c^0 x_i^0 dz\, g_i
$$
and the Csisz\'ar-Kullback inequality with constant $C_K>0$
(see, e.g., \cite{Jue12,UAMT00}), we find that
\begin{align*}
  \|cx_i-c\bar x_i^0\|_{L^1(\Omega)}^2
	&= \left(\int_\Omega c^0 x_i^0 dz\right)^2\|f_i-g_i\|_{L^1(\Omega)}^2
	\le \int_\Omega c^0 x_i^0 dz\left(\int_\Omega \frac{\rho_i^0}{M_i} dz\right)
	C_K\int_\Omega f_i\ln\frac{f_i}{g_i}dz \\
	&\le M_i^{-1} C_K\|\rho_i^0\|_{L^1(\Omega)} H^*(\rho).
\end{align*}
Together with \eqref{5.decay}, the conclusion of the theorem follows.


\end{document}